\numberwithin{equation}{section}
\theoremstyle{plain}
\newtheorem{thm}{Theorem}[section]
\newtheorem{prp}[thm]{Proposition}
\newtheorem{lm}[thm]{Lemma}
\newtheorem{rmq}[thm]{Remark}
\theoremstyle{definition}
\newtheorem{df}[thm]{Definition}
\newcommand\WS{(W,S)}
\newcommand\FF{\mathcal{F}\WS}
\newcommand\FFF{\mathcal{F}}
\newcommand\OWS[1]{\Omega(#1)}
\newcommand\OWSs[1]{\Omega_0(#1)}
\newcommand\OWSp[1]{\Omega_1(#1)}
\newcommand\CSL{\Lambda\WS}
\newcommand\CSLs{\Lambda}
\newcommand\rond[1]{\mathring#1}
\title{Minimal generating set  of Cactus groups}
\author{Eddy Godelle}
\address{Godelle Eddy, Universit\'e de Caen, LMNO UMR 6139  14000 Caen, France}
\email{eddy.godelle@unicaen.fr}
\begin{document}

\begin{abstract}
\noindent
We  prove that the lower central series of the cactus group associated with  a non commutative Coxeter group never stabilizes. We also compute a minimal presentation in terms of generators for the cactus group associated with a finite Coxeter groups, except in type $E$.
\end{abstract}
\subjclass[2010]{20F55, 20F05, 20F14}
\keywords{Cactus groups, lower central series, minimal presentation}. 
\maketitle

The first appearance of the cactus group $J_n$ is implicit in \cite{Dri1990}. It was explicitly  and independently introduced  in \cite{Dev1999} and \cite{HenKam2006}  where it is related to some configurations spaces, operads and coboundary categories. More generally, a group  $C(W,S)$  can be associated  with every Coxeter system $(W,S)$ \cite{DavJanSco2003}. It is  still called a cactus group. The cactus group $J_n$ is the cactus group associated with the symmetric  group $\mathfrak{S}_n$ equiped with its classical Coxeter structure. Recently cactus groups $C(W,S)$ have attracted the attention of specialists in representation  theory \cite{Bon2016, Los2019, ChmGliPyl2020,RouWhi2024}. In particular, they are expected to be related to  the Calogero-Moser spaces and  to the Kazhdan-Lusztig cells~\cite{RouWhi2024}.  Recall~\cite{Bou} that a Coxeter matrix on a finite set $S$ is a symmetric matrix $M_S = (m_{s,t})_{s,t\in S}$  where diagonal entries are equal to~$1$ and nondiagonal entries lie in $\mathbb{N}_{\geq 2}\cup \{\infty\}$. Its  associated Coxeter group $W$ is defined by the group  presentation  \[
W=\langle S\mid s^2=1; Prod(t,s,m_{s,t}) = Prod(s,t,m_{s,t})\text{ for }s,t\in S\,,\ s\neq t\,,\ m_{s,t}\neq\infty\rangle\,\] where  $Prod(s,t,m)$ denotes the word $sts\cdots$ with $m$ letters.  The pair  $\WS$ is called the Coxeter system associated with the Coxeter matrix~$M_S$.  For any subset $X$ of $S$, by $W_X$ we denote the subgroup of $W$ generated by $X$. Such a subgroup is called a standard parabolic subgroup of $W$. This is well-known that the pair $(W_X,X)$ is the Coxeter system associated with the matrix $(m_{s,t})_{s,t\in X}$. The Coxeter group $W$ is said to be irreductible if it can not be written as a not trivial direct product of two of its standard parabolic subgroups.  This is equivalent to say there is no proper partition $X\cup Y$ of $S$ such that  $m_{x,y} = 2$ for any  $x\in X$ and $y\in Y$.  When $W$ is finite and irreducible,  there exists a unique element $\omega_S \neq 1$ in $W$ that permuts $S$ by conjugation.  Moreover $\omega_X$  has order $2$.  Irreducible Coxeter systems with $W$ finite are classified (see Section~\ref{annexepresent}). By $\FF$, or simply $\FFF$,  let us denote  the set of all non-empty subsets $X$ of $S$ such that $W_X$ is a finite, irreducible parabolic subgroup of $W$.   For $X\in\FFF$,  set  $$\OWS{X} = \{Y\in \FFF  \mid  Y\neq X \textrm{ and } \omega_X Y \omega_X \subseteq S \}.$$  
We have a bijection $\omega_X:\OWS X \to \OWS X$  of order~$1$ or~$2$  defined by $\omega_X(Y) = \omega_XY\omega_X$.  In this case, we have $\omega_X \omega_Y \omega_X = \omega_{\omega_X(Y)}$ (see Lemma~\ref{lemme1}(i)). Using the same notation to denote both the element $\omega_X$ in $W$ and the associated bijection $\omega_X \omega_Y \omega_X = \omega_{\omega_X(Y)}$ is an abuse of notation, but this will not cause any confusion. There is a partition $\OWS{X} = \OWSs{X} \cup \OWSp{X}$  where  $$\begin{array}{l}\OWSs{X} =  \{Y\in \FFF  \mid Y\subsetneq X\}\\\OWSp{X} =  \{Y\in \FFF  \mid Y\cup X\textrm{ not irreducible}\}\end{array}. $$ Clearly,  $\omega_X$ stabilizes $\OWSs{X}$ and fixes  $\OWSp{X}$:  when $Y\in\OWSs{X}$, then $\omega_X(Y)$ lies in  $\OWSs{X}$;   when $Y\in\OWSp{X}$, then $\omega_X(Y) = Y$.  Moreover $Y\in\OWSp{X} \iff X\in\OWSp{Y}$.

\begin{df}\label{defcactus}
The Cactus  group $C\WS$ associated with the Coxeter system  $\WS$ is defined by  the following group presentation: 
\begin{equation} \label{pres1cactus}
\left\langle c_X,\  X \in \FFF\ \left | \begin{array}{cccl}
(R_1)&c_X^2=1&;&X\in \FFF\\
(R_2)&c_X c_Y =  c_{\omega_X(Y)} c_X&;&Y\in \OWSs{X}\\ 
(R_3)& c_Y c_X = c_X c_Y&;&Y\in \OWSp{X}
  \end{array}\right.\right\rangle .
\end{equation}  
\end{df}
   
For the remaining of the article, we set $C_\FFF = \{c_X \mid X\in \FFF\}$. More generally,  for $U \subset \FFF$ we set $$C_U = \{c_X \mid X\in U\}.$$ We remark there is a relation $c_Xc_Y = \cdots$ if and only if there is a relation $c_Yc_X = \cdots$, and this happens  precisely when $Y\in \OWS X$ or $X\in \OWS Y$.   It immediatly follows from the presesentation that the map $c_X\mapsto \omega_X$ extends to an onto morphism from $C\WS$ to~$W$.  As a consequence, the $c_X$ are distincts in $C\WS$. 
 
In  \cite{CheNan2025},  the authors adress combinatorial questions about the classical cactus groups $J_n$, such as finding a minimal presentation in terms of generators  and  the study of its possible finite quotients in connection with the lower central series of the cactus group.  The objective of the present article is  both to extend their results to other cactus groups and to provide a short proof in the case $J_n$. The following results extend \cite[Theorem B]{CheNan2025} and partially \cite[Theorem C]{CheNan2025}.

\begin{prp} \label{ThmA3} Let $\WS$ be a Coxeter system. Consider the equivalence relation $\equiv$ on $\FF$ defined as the transitive closure of the binary relation $\equiv_0$ defined by $Y\equiv_0 Z$ is there exists $X$ so that $Z = \omega_X\,Y\, \omega_X$ in $W$.
Denote by $m$ the number of equivalent classes on $\FFF$. Then,
\begin{enumerate}
\item The abelianisation group of $C\WS$ is isomorphic to $\mathbb{Z}^m_2$.
\item  Any generating set of $C\WS$ possesses at least $m$ elements. 
\item If $\Lambda$ is a transversal for $\equiv$, then $C\WS$ possesses a finite presentation with generating set~$\Lambda$. 
\end{enumerate} 
\end{prp}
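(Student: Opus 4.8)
The plan is to derive all three statements from a single computation, namely the abelianisation $C\WS^{\mathrm{ab}}$, treating (i) as the heart of the matter. First I would read off what the three relation families become additively: $(R_1)$ forces each $\bar c_X$ to have order dividing $2$; $(R_3)$ becomes vacuous; and $(R_2)$, which reads $\bar c_X+\bar c_Y=\bar c_{\omega_X(Y)}+\bar c_X$, collapses to $\bar c_Y=\bar c_{\omega_X(Y)}$ for $Y\in\OWSs X$. These identifications are exactly the defining instances of $\equiv_0$: a nontrivial $Y\equiv_0 Z$ forces $Y\in\OWS X$ and $Z=\omega_X(Y)$ for some $X$, and since $\omega_X$ fixes $\OWSp X$ pointwise (so $Y\in\OWSp X$ would give $Z=Y$) the only relevant case is $Y\in\OWSs X$, which is precisely $(R_2)$. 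Hence $C\WS^{\mathrm{ab}}$ is generated by one order-$2$ element per $\equiv$-class, so it is a quotient of $\mathbb{Z}_2^m$. For independence I would exhibit the homomorphism $C\WS\to\mathbb{Z}_2^m$, $c_X\mapsto e_{[X]}$ sending $c_X$ to the basis vector of its class, check it respects $(R_1),(R_2),(R_3)$ using $Y\equiv\omega_X(Y)$, and note that a surjection onto $\mathbb{Z}_2^m$ from a quotient of $\mathbb{Z}_2^m$ is forced to be an isomorphism. Statement (ii) is then immediate, since any generating set of $C\WS$ maps onto a generating set of $C\WS^{\mathrm{ab}}\cong\mathbb{F}_2^m$, and such a space needs at least $m$ generators.

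The substantive part of (iii) is a generation lemma: $C_\Lambda$ generates $C\WS$. I would prove this by downward induction on cardinality, using the crucial fact that $|Y|$ is constant on each $\equiv$-class (every $\equiv_0$ step either fixes the subset or replaces $Y$ by $\omega_X(Y)$, of the same size), whereas an $\OWSs X$ step has $Y\subsetneq X$ and hence a conjugator of strictly larger cardinality. For the base case, a member $X$ of $\FFF$ of maximal cardinality lies in no larger member, so no nontrivial $\equiv_0$ step can move it; it is therefore alone in its class and lies in $\Lambda$. For the inductive step, fix $Y$ with $|Y|=k$ and let $X_0\in\Lambda$ be its class representative, and choose a chain $Y=Y_0\equiv_0\cdots\equiv_0 Y_r=X_0$; its nontrivial steps have conjugators of cardinality $>k$, which lie in $\langle C_\Lambda\rangle$ by induction (trivial steps with $Y_{i+1}=Y_i$ being harmless). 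Rewriting $(R_2)$ as $c_{\omega_X(Y)}=c_Xc_Yc_X$ via $(R_1)$, I would read the chain backwards from $c_{X_0}\in C_\Lambda$, conjugating step by step down to $c_Y$, thereby placing $c_Y$ in $\langle C_\Lambda\rangle$.

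With generation in hand, (iii) follows from a standard Tietze argument: $C\WS$ is finitely presented, since $\FFF$ is a finite set of subsets of the finite set $S$, so Definition~\ref{defcactus} has finitely many generators and relations; and a finitely presented group admits a finite presentation on any finite generating set, obtained by adjoining the generators $C_\Lambda$ with their defining words and then eliminating each $c_X$ with $X\notin\Lambda$ using the expression furnished by the generation lemma. I expect the generation lemma, and specifically the backward rewriting along the chain together with the constancy-of-cardinality observation that makes the induction run, to be the only genuinely delicate point; the remaining content is a formal consequence of the abelianisation computation.
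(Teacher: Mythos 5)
Your proposal is correct and follows essentially the same route as the paper: reading off the abelianisation $\mathbb{Z}_2^m$ from the collapse of the $(R_2)$/$(R_3)$ relations to $\equiv$-identifications for (i) and (ii), and proving (iii) by Tietze transformations plus an induction on cardinality (the paper inducts on $|S\setminus Z|$, which is the same induction as your downward induction on $|Y|$) showing each $c_Z$ is a consequence-word in $C_\Lambda$ via $c_Z=c_Xc_Yc_X$ along a chain of $\equiv_0$-steps. Your explicit homomorphism $c_X\mapsto e_{[X]}$ for the lower bound and your explicit statement that cardinality is constant on $\equiv$-classes while conjugators are strictly larger are just slightly more detailed versions of what the paper leaves implicit.
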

  
\begin{thm}\label{ThmBC}
 Let $\WS$ be a  Coxeter system. For $U\subseteq \FFF $, by $C(W,U)$  we denote the subgroup of $C\WS$ generated by the set $C_U$.
\begin{enumerate}
\item  The group $C\WS$ is  Abelian if and ony if $W$ is Abelian.
\item When $W$ is not Abelian then there exist $X,Y\in \FFF$ and a subgroup $G$ of $C\WS$ such that 
\begin{enumerate}
\item  $C(W,\{X,Y\})$ is isomorphic to $\mathbb{Z}_2 * \mathbb{Z}_2$.
\item  $C\WS = G\rtimes C(W,\{X,Y\})$. 
\item The lower central series of $C\WS$ does not stabilize. 
\end{enumerate} 
\end{enumerate} 
\end{thm}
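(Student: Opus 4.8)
The plan is to deduce all three assertions of part~(2) from a single object: a surjective homomorphism $\phi\colon C\WS\to\mathbb{Z}_2*\mathbb{Z}_2$ onto the infinite dihedral group $D_\infty$ that admits a section. First I would reduce to the irreducible case. If $S=S_1\sqcup S_2$ with $m_{s_1,s_2}=2$ for all $s_i\in S_i$, then every $X\in\FFF$ lies in $S_1$ or in $S_2$, and every cross pair lies in some $\OWSp{\cdot}$, whence $C\WS\cong C(W_{S_1},S_1)\times C(W_{S_2},S_2)$. Since $\gamma_k(A\times B)=\gamma_k(A)\times\gamma_k(B)$ and a direct factor is a retract, both the abelianity question and the non-stabilization of the lower central series pass to the irreducible factors, so I may assume $W$ irreducible. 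For part~(1), $W$ abelian means $m_{s,t}\in\{1,2\}$ throughout, so $\FFF$ consists only of singletons, $\OWSs{\{s\}}=\varnothing$, $\omega_{\{s\}}$ fixes $S$ pointwise, and the presentation collapses to $\langle c_s\ (s\in S)\mid c_s^2=1,\ c_sc_t=c_tc_s\rangle\cong(\mathbb{Z}_2)^{|S|}$; conversely, $C\WS$ abelian forces its quotient $W$ to be abelian through $c_X\mapsto\omega_X$.

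For the core construction I would realise $\mathbb{Z}_2*\mathbb{Z}_2$ as the group $D_\infty$ of reflections $\sigma_k$ ($k\in\mathbb{Z}$) of the line, with conjugation law $\sigma_j\sigma_k\sigma_j=\sigma_{2j-k}$ and unit translation $r=\sigma_1\sigma_0$ of infinite order. I then seek a homomorphism $\phi$ sending each $c_Z$ either to the identity or to a reflection. Concretely, pick a union $T$ of $\equiv$-classes (the support of a homomorphism $C\WS\to\mathbb{Z}_2$ furnished by Proposition~\ref{ThmA3}(i)) and an integer ``position'' function $n\colon T\to\mathbb{Z}$, and set $\phi(c_Z)=\sigma_{n(Z)}$ for $Z\in T$ and $\phi(c_Z)=1$ otherwise. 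Using $\omega_X\omega_Y\omega_X=\omega_{\omega_X(Y)}$ (Lemma~\ref{lemme1}(i)), relation $(R_1)$ holds automatically, while $(R_2)$ and $(R_3)$ become the affine conditions
\[
n(\omega_Z(Z'))=2\,n(Z)-n(Z')\ \ (Z'\in\OWSs{Z}\cap T,\ Z\in T),\qquad n(\omega_Z(Z'))=n(Z')\ \ (Z'\in\OWSs{Z}\cap T,\ Z\notin T),
\]
together with $n(Z)=n(Z')$ whenever $Z'\in\OWSp{Z}$ with $Z,Z'\in T$. These say precisely that $Z\mapsto\sigma_{n(Z)}$ intertwines the conjugation action of the $\omega$'s with reflections of the line. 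I would arrange $T$ and $n$ so that two active generators $X,Y$ receive consecutive positions, say $n(X)=0$ and $n(Y)=1$; then $\phi(c_X)=\sigma_0$ and $\phi(c_Y)=\sigma_1$ already generate $\phi(C\WS)$, so $\phi$ is onto $D_\infty$.

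Granting such a $\phi$, parts~(2a)--(2c) are formal. The subgroup $\langle c_X,c_Y\rangle$ is generated by two involutions, hence is a quotient of $\mathbb{Z}_2*\mathbb{Z}_2$; as $\phi$ maps it onto the Hopfian group $D_\infty$, this quotient map is an isomorphism, which is~(2a). Since $\phi(\langle c_X,c_Y\rangle)=\phi(C\WS)$ and $\phi$ restricts to an isomorphism on $\langle c_X,c_Y\rangle$, composing $\phi$ with that inverse isomorphism is a retraction onto $\langle c_X,c_Y\rangle$; thus $C\WS=G\rtimes\langle c_X,c_Y\rangle$ with $G=\ker\phi$, giving~(2b). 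Finally $\gamma_k(D_\infty)=\langle r^{2^{k-1}}\rangle$ is strictly decreasing, and a surjection carries $\gamma_k$ onto $\gamma_k$, so the $\gamma_k(C\WS)$ are pairwise distinct, proving~(2c).

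The one substantial point---the main obstacle---is the existence, for every irreducible non-abelian $W$, of a \emph{non-constant} solution $n$ to the displayed relations; a constant $n$ would only recover the trivial map to $\mathbb{Z}_2$. Since $W$ is non-abelian there is an edge $s$--$t$ with $m_{s,t}\ge 3$. When no finite irreducible parabolic properly contains a singleton (for instance when the only labels $\ge 3$ equal $\infty$), $\OWSs{\{s\}}=\varnothing$ and the naive assignment $c_{\{s\}}\mapsto\sigma_0$, $c_{\{t\}}\mapsto\sigma_1$, all other $c_Z\mapsto 1$, already works. In the remaining case I would seed non-constancy exactly as in type $A_3$: take $T$ to be the class of $\{s\}$ together with the rank-two class of $\{s,t\}$ and solve the relations there. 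Crucially, the affine conditions are \emph{local}---$(R_2)$ only involves a chain $Z'\subsetneq Z$ inside the single finite irreducible parabolic $W_Z$, and $(R_3)$ only involves pairs with reducible union---so well-definedness of $\phi$ reduces to a finite verification inside each $W_Z$, where $\omega_Z$ acts on sub-parabolics through the explicit diagram involution induced by $-w_0$. Running through the classification of finite irreducible Coxeter systems, one checks that these reflection identities are mutually compatible and leave enough freedom to keep $n$ non-constant; this case analysis, together with the bookkeeping needed to realise two consecutive positions, is where essentially all the work lies.
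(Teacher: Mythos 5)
Your overall architecture is sound and matches the paper's: reduce to $W$ irreducible, prove (1) via the surjection $c_X\mapsto\omega_X$ and the collapse of the presentation when $W$ is abelian, and then deduce (2a)--(2c) formally from a surjection $\phi\colon C\WS\to\mathbb{Z}_2*\mathbb{Z}_2$ that is split over $\langle c_X,c_Y\rangle$ (the paper uses the section $u\mapsto c_X$, $v\mapsto c_Y$ where you invoke Hopficity of $D_\infty$; both work). The gap is exactly the point you flag yourself: the existence of a \emph{non-constant} admissible position function $n$, i.e.\ of $\phi$ itself, is never proved. You reduce it to ``running through the classification \ldots one checks,'' but that verification is the entire content of the theorem, and it is a global problem on $\FFF$ (note also that $W$ need not be finite, so the classification of finite irreducible Coxeter groups does not directly organize the cases; what matters is the trichotomy the paper uses: $W$ infinite, $W$ finite with $\omega_S$ central, $W$ finite with $\omega_S$ not central).

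Moreover, the one concrete recipe you offer for the hard case provably fails. Take $W$ of type $A_n$, $n\ge 4$ (the classical cactus groups). Your $T$ is the $\equiv$-class of the singletons together with the class of the adjacent pairs. The $(R_3)$ conditions force all singletons to a common position $a$: the ``commutation graph'' on $\{\sigma_1\},\dots,\{\sigma_n\}$ (edges when $|i-j|\ge 2$) is connected for $n\ge 4$. Then for each pair $Z=\{\sigma_i,\sigma_{i+1}\}\in T$, the $(R_2)$ condition with $Z'=\{\sigma_i\}$ reads $n(\{\sigma_{i+1}\})=2n(Z)-n(\{\sigma_i\})$, i.e.\ $a=2n(Z)-a$, so $n(Z)=a$. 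Every solution on your $T$ is constant, $\phi$ factors through $\mathbb{Z}_2$, and no surjection onto $\mathbb{Z}_2*\mathbb{Z}_2$ arises --- so ``seeding as in $A_3$'' does not propagate beyond rank $3$. The paper's solution sits at the opposite end of $\FFF$: when $W$ is infinite, or finite with $\omega_S$ central, choose two \emph{maximal} elements $X,Y$ of $\FFF\setminus\{S\}$ with $X\cup Y$ irreducible and send $c_X\mapsto u$, $c_Y\mapsto v$, all other generators to $1$ (maximality guarantees $X,Y$ never occur as the small member of an $(R_2)$ relation, and irreducibility of $X\cup Y$ guarantees there is no commutation relation between $c_X$ and $c_Y$); when $W$ is finite with $\omega_S$ not central (types $A$, $D_{2n+1}$, $E_6$, odd dihedral), the relation $c_Sc_X=c_{\omega_S(X)}c_S$ rules out any two-generator assignment, and one sends $c_X\mapsto u$, $c_{\omega_S(X)}\mapsto vuv$, $c_S\mapsto v$ for $X$ maximal proper --- in your language, positions $0,2,1$ on $T=\{X,\omega_S(X),S\}$. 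Without this (or some equally explicit) construction covering every irreducible non-abelian $W$, your proof is incomplete.
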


At this point, a natural question is whether some transversals provide better presentations. The following definition and theorem aim to answer this question, generalising \cite[Theorem A]{CheNan2025}. 

\begin{df} \label{defiCSL} Let $\WS$ be a Coxeter system.  Consider  a subset $\CSLs$ of $\FFF$.
\begin{enumerate} 
 \item A map $\Psi: \FFF \to \CSLs\times \CSLs, X\mapsto (\overline{X},\rond{X})$ is said to be a \emph{section map} for $\WS$ when
\begin{enumerate}
\item  For all $X \in \CSLs$ we have $\rond{X} = \overline{X} = X$; 
\item  For all $X \in \FFF$ we have $\rond{X}\subseteq \overline{X}$ and $\omega_{\overline{X}}(\rond{X}) = X$. 
\item For any $Y,Z$ in $\FFF$  with $Y\cup Z$ not irreducible, there exists $X\in \CSLs$ so that $Y,Z \in \OWS X$ and $\{\omega_X(Y), \omega_X(Z)\}\cap \CSLs \neq\emptyset$.   
\end{enumerate}  
In this case, we say that the pair $(\CSLs,\Psi)$ is a section for $\WS$.
\item A section $(\CSLs,\Psi)$ is called a transversal section  when~$\CSLs$ is a transversal for the relation~$\equiv$.
\item   A subset  $\CSLs$  is said to be a \emph{cross section} for $\WS$ when it possesses a  section map $\Psi$ so that for all $X$ in $\FFF$ the pair $\Psi(X) = (Y,Z)$ is the unique pair of elements of $\CSLs$ so that $\omega_Y(Z) = X$. 
\end{enumerate} 
\end{df}
Clearly, when $\CSLs$ is a cross section, then $\Psi$ is uniquely defined and $(\CSLs,\Psi)$ is a transversal section.
Note that $\WS$ is always equiped  a section map:  the map $X\mapsto (X,X)$ is a section map.  However, more can be said when $W$ is finite and irreducible:  
\begin{prp} \label{propA} Let $\WS$ be a Coxeter system with $W$ finite and irreducible. Then,\begin{enumerate}
\item If $W$ is of type $A$, $B$, $D_{2n}$, $F_4$, $I_n$, $H_3$ or $H_5$ then $\WS$ possesses a cross section.
\item If  $W$ is of type $D_{2n+1}$ then $\WS$ possesses a transversal section.
\item if $W$ is  of type $E_6$, $E_7$ or $E_8$, then $\WS$  does not possess a transversal section.  
\end{enumerate} 
\end{prp}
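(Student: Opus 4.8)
The plan is to argue by a case analysis over the classification of finite irreducible Coxeter systems recalled in Section~\ref{annexepresent}. The common starting point is a reformulation of condition~(b) of Definition~\ref{defiCSL}. Each $X\in\FFF$ is a connected subdiagram, and for $Z\subseteq Y$ the element $\omega_Y(Z)=\omega_Y Z\omega_Y$ is the image of $Z$ under the diagram automorphism $\theta_Y$ of $(W_Y,Y)$ induced by the longest element of $W_Y$. A short manipulation then shows that a transversal $\CSLs$ carries a section map satisfying (a)--(b) if and only if \emph{every} $X\in\FFF\setminus\CSLs$ admits some $Y\in\CSLs$ with $X\subsetneq Y$ and $\omega_Y(X)\in\CSLs$; since $X\equiv\omega_Y(X)$, this element is automatically the representative of the class of $X$. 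For a cross section one demands in addition that the factorization $X=\omega_{\overline X}(\rond X)$ be unique. I will always build $\Psi$ by letting $\overline X$ be a canonical subdiagram ``framing'' $X$ and $\rond X=\omega_{\overline X}(X)$ the chosen representative, so the whole problem becomes: choose representatives so that each remaining subdiagram can be ``corrected'' to its representative in a single step through a frame lying in $\CSLs$.

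For the positive cases in~(i) I would exhibit explicit cross sections. The prototype is type $A$: here $\FFF$ consists of the intervals $[i,j]$, the class of $[i,j]$ is determined by its length, and $\CSLs=\{[1,k]\}$ is a cross section because $[i,j]=\omega_{[1,j]}([1,j-i+1])$ is the \emph{only} way to write $[i,j]$ as $\omega_Y(Z)$ with $Y,Z$ initial intervals and $Z\subseteq Y$. The same recipe---choose left/inner-anchored representatives and read $\overline X$ off from the position of $X$---adapts to types $B$, $D_{2n}$, $F_4$, $I_2(m)$, $H_3$ and $H_4$. The relevant uniform feature is that the longest element of each group acts on its whole diagram either trivially (so the ``special'' subdiagrams behave rigidly and factor as $Y=Z$) or by the unique diagram flip, which is exactly what pins down the frame and forces uniqueness. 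Verifying (a)--(c) together with uniqueness is then a finite, essentially mechanical check per diagram.

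For $D_{2n+1}$ in case~(ii) the longest element of the whole group swaps the two tips of the fork, and so do the longest elements of all odd-rank $D$-subdiagrams through the fork. This yields several distinct frames realising the same tip exchange, so unique factorization breaks down and no cross section of the above shape survives. However, a transversal section requires only \emph{existence} of the factorization: I would fix one representative per class and, for each non-representative $X$, select any admissible frame $Y\in\CSLs$ with $\omega_Y(X)$ equal to the representative. The tip-swap symmetry guarantees such a $Y$ exists, and condition~(c) is then checked directly on the reducible pairs.

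The hard part, and the main obstacle, is the impossibility in case~(iii). The strategy is to enumerate the $\equiv$-classes of $\FFF$ and, for a fixed class $C$, to form its \emph{correction graph}: vertices are the elements of $C$, and $X,X'$ are joined whenever $\omega_Y(X)=X'$ for some frame $Y$ that \emph{every} transversal is forced to contain. By the reformulation above, condition~(b) forces the representative of $C$ to be adjacent to all other vertices, so a transversal section can exist only if some class representative dominates its correction graph. For $E_6$ this already fails for the class of $A_4$-subdiagrams $\{1,2,3,4\}$, $\{1,3,4,5\}$, $\{3,4,5,6\}$, $\{2,4,5,6\}$: their only frames are $S$, the unique $A_5$-subdiagram $\{1,3,4,5,6\}$ and one of the two $D_5$-subdiagrams $\{1,2,3,4,5\}$, $\{2,3,4,5,6\}$; the first two form singleton classes and hence lie in every transversal, while only one $D_5$ can. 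Computing the three induced exchanges shows that, for either choice of $D_5$-representative, the correction graph is a path on four vertices, which has no dominating vertex; hence $E_6$ admits no transversal section. The remaining labour is to run the analogous analysis for $E_7$ and $E_8$, where the larger poset of connected subdiagrams supplies extra admissible frames, so one must locate a specific class whose correction graph still fails to be dominated \emph{for every} transversal. Isolating that class and ruling out all transversal choices is the bulk of the case-checking and the genuine difficulty of the proof.
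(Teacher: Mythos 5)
Your overall strategy is the same as the paper's: explicit cross/transversal sections for the positive cases, and for type $E$ an obstruction showing that the elements forced into any transversal cannot supply all the required conjugations. Your reformulation of conditions (a)--(b) of Definition~\ref{defiCSL} (for $X\notin\CSLs$ one needs $Y\in\CSLs$ with $X\subsetneq Y$ and $\omega_Y(X)\in\CSLs$) is correct, and your $E_6$ argument checks out: it is the paper's argument transposed from the class of $A_3$-subdiagrams (five elements, in the paper's labelling) to the class of $A_4$-subdiagrams (four elements, in Bourbaki's labelling). In both versions the only frames available to a transversal are $S$, the unique $A_5$, and exactly one of the two $D_5$'s, and one checks that no choice of representative is ``reachable'' from all the other members of the class.

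The genuine gap is part (iii) for $E_7$ and $E_8$. You explicitly leave these as ``remaining labour,'' exhibit no failing class, and even describe locating one as ``the genuine difficulty of the proof''; as written, two of the three types in (iii) are unproven. The transfer from $E_6$ is indeed not automatic, for the reason you hint at: in $E_7$ and $E_8$ the longest element $\omega_S$ is central, so it contributes no edges to your correction graphs, while new frames (the maximal $A$- and $D$-type subdiagrams, extra $A_5$'s) contribute new ones, so the $E_6$ graph does not simply embed. What makes the case go through --- and what the paper compresses into ``clearly the same argument can be applied'' --- is the remark stated just before the paper's case analysis: when $\omega_S$ is central, every maximal element of $\FF\setminus\{S\}$ is alone in its $\equiv$-class, hence forced into any transversal; in particular the $E_6$ (resp.\ $E_7$) subdiagram is forced, the two $D_5$'s are still interchanged only by $\omega_{E_6}$, and a finite enumeration of frames for, say, the $A_4$-class (five elements in $E_7$) again produces a correction graph with no dominating vertex, whichever $D_5$ and whichever $A_5$ the transversal contains. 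That enumeration is short, but it must actually be carried out; your proposal stops exactly where it is needed. A secondary, lesser issue: for types $B$, $D_{2n}$, $F_4$, $H_4$ you appeal to ``the same recipe'' as type $A$ instead of writing the sections down, but the type-$A$ recipe alone does not suffice --- for $D_{2n}$ the paper's cross section needs the extra member $\{\sigma_1\}\cup\{\sigma_3,\dots,\sigma_{2n}\}$ in addition to the initial and final segments.
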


\begin{thm}\label{ThmA2} Let $\WS$ be a Coxeter system and $(\CSLs,\Psi)$ be a cross section.  Then $C\WS$ possesses the following group presentation: 
\begin{equation} \label{pres3cactus}
\left\langle C_\CSLs \left | \begin{array}{cccl}
(R_{1.a})&c_X^2=1&;&X\in \CSLs;\\
(R_{2.b})& c_Xc_Yc_Zc_Y =c_{Y'}c_{Z'}c_{Y'} c_X&;&\left\{\begin{array}{l}X\in \CSLs ; (Y,Z), (Y',Z') \in \Psi(\Omega_0(X)\!\setminus\!\CSLs)\\\textrm{and } \omega_{Y'}(Z') = \omega_{X}(\omega_{Y}(Z))\end{array}\right.\\
 (R_{3.b})&(c_X c_Yc_Xc_Z)^2 = 1&;&  Z\in \CSLs \textrm{ and }  (X,Y)\in \Psi\big(\OWSp{Z}\setminus \CSLs\big) \end{array}\right. \right\rangle
\end{equation}
Moreover,  the above presentation is minimal in terms of generators.
\end{thm}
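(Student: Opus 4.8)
The plan is to build mutually inverse homomorphisms between $C\WS$, presented by~(\ref{pres1cactus}), and the group $G$ presented by~(\ref{pres3cactus}). Let $\phi\colon G\to C\WS$ send $c_X\mapsto c_X$ for $X\in\CSLs$. In $C\WS$ relations $(R_1)$ and $(R_2)$ give $c_Yc_Zc_Y=c_{\omega_Y(Z)}$ whenever $Z\subsetneq Y$, so each $(R_{2.b})$ becomes $c_Xc_W=c_{\omega_X(W)}c_X$ with $W=\omega_Y(Z)\in\OWSs X$, and each $(R_{3.b})$ becomes $(c_Wc_Z)^2=1$ with $W=\omega_X(Y)\in\OWSp Z$; these are consequences of $(R_1)$--$(R_3)$, so $\phi$ is well defined. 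Conversely let $\psi\colon C\WS\to G$ send $c_X\mapsto c_X$ for $X\in\CSLs$ and $c_X\mapsto c_{\overline X}c_{\rond X}c_{\overline X}$ for $X\in\FFF\setminus\CSLs$. Granting that $\psi$ is well defined, $\psi\circ\phi$ is the identity on generators, and $\phi\circ\psi$ is the identity because $c_{\overline X}c_{\rond X}c_{\overline X}=c_X$ holds in $C\WS$ by $(R_2)$ (as $\omega_{\overline X}(\rond X)=X$ and $\rond X\subsetneq\overline X$). Everything therefore reduces to checking that the $\psi$-images of $(R_1)$, $(R_2)$, $(R_3)$ hold in $G$.

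Abbreviate $c_X:=c_{\overline X}c_{\rond X}c_{\overline X}$ in $G$ for $X\notin\CSLs$. The involutivity $c_X^2=1$ for every $X\in\FFF$ is immediate from $(R_{1.a})$, so the image of $(R_1)$ holds. The image of $(R_2)$ amounts to the conjugation identity
\[
(\mathrm{L2})\qquad c_Xc_Yc_X=c_{\omega_X(Y)}\quad\text{for all }X\in\FFF,\ Y\in\OWSs X.
\]
For $X\in\CSLs$ this is exactly $(R_{2.b})$ when both $Y$ and $\omega_X(Y)$ lie outside $\CSLs$; when $\omega_X(Y)\in\CSLs$ or $Y\in\CSLs$ the cross-section uniqueness identifies the relevant value of $\Psi$ and reduces the identity to $(R_{1.a})$ or to the definition of $\psi$. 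For general $X\notin\CSLs$ I would substitute $c_X=c_{\overline X}c_{\rond X}c_{\overline X}$ and apply the $\CSLs$-case of~(L2) three times in succession; the key point is that at each stage the set being conjugated is a proper subset of $\overline X$, and of $\rond X$ after the first conjugation (here one uses $\omega_{\overline X}(X)=\rond X$), so the $\CSLs$-case applies, and $\omega_{\overline X}\omega_{\rond X}\omega_{\overline X}=\omega_X$ by Lemma~\ref{lemme1}(i) yields~(L2).

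The image of $(R_3)$ is the commuting identity
\[
(\mathrm{L3})\qquad c_Xc_Y=c_Yc_X\quad\text{whenever }X\cup Y\text{ is not irreducible},
\]
and this is where the cross-section hypothesis is indispensable; I expect it to be the main obstacle. First, $X,Y\in\CSLs$ cannot both occur: since $\omega_X(Y)=Y$ for $Y\in\OWSp X$, the pairs $(X,Y)$ and $(Y,Y)$ both lie in $\CSLs\times\CSLs$ and map to $Y$ under $(P,Q)\mapsto\omega_P(Q)$, so uniqueness forces $X=Y$, which is excluded. When exactly one of $X,Y$ lies in $\CSLs$,~(L3) is precisely $(R_{3.b})$. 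For the remaining case $X,Y\notin\CSLs$ I would invoke the section-map condition~(c) to obtain $P\in\CSLs$ with $X,Y\in\OWS P$ and, say, $\omega_P(X)\in\CSLs$. Then $\omega_P(X)\in\CSLs$ forces $X\in\OWSs P$ (otherwise $\omega_P(X)=X\notin\CSLs$), so~(L2) conjugates $c_X$; and $c_Y$ is conjugated either by~(L2) if $Y\in\OWSs P$, or by the one-in-$\CSLs$ case of~(L3) if $Y\in\OWSp P$. Conjugating~(L3) by $c_P$ then turns it into the commuting relation for $\{\omega_P(X),\omega_P(Y)\}$; here $\omega_P(X)\in\CSLs$ while $\omega_P(Y)\notin\CSLs$ (the uniqueness argument again rules out both being in $\CSLs$), so this relation is an instance of $(R_{3.b})$ and~(L3) follows.

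Finally, minimality in terms of generators is immediate from Proposition~\ref{ThmA3}: a cross section is a transversal for $\equiv$, so $\LG{C_\CSLs}=\LG\CSLs=m$, which meets the lower bound of Proposition~\ref{ThmA3}(ii). Hence no generating set of $C\WS$ has fewer elements, and the presentation~(\ref{pres3cactus}) is minimal.
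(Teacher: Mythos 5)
Your proof is correct, and its computational kernel coincides with the paper's, but the architecture is genuinely different. The paper first proves a more general statement, Theorem~\ref{ThmA}: starting from Presentation~(\ref{pres1cactus}) it performs staged Tietze transformations (Lemmas~\ref{step3}, \ref{step1} and~\ref{step2}), valid for an \emph{arbitrary} section $(\CSLs,\Psi)$, arriving at a presentation with six families of relations; Theorem~\ref{ThmA2} is then deduced by observing that for a cross section the families $(R_{2.a})$, $(R_{2.c})$ and $(R_{3.a})$ are empty, via the same uniqueness argument you use (for $X,Y,Z\in\CSLs$, $\omega_X(Y)=Z$ forces $X=Y=Z$). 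You instead target Presentation~(\ref{pres3cactus}) directly, building mutually inverse homomorphisms $\phi$ and $\psi$ and disposing of the would-be extra relation families inline by cross-section uniqueness; the two key manipulations you perform --- the conjugation through $c_{\overline{X}}$, $c_{\rond{X}}$, $c_{\overline{X}}$ combined with Lemma~\ref{lemme1}(i) to handle relations $(R_2)$ with $X\notin\CSLs$, and condition (c) of Definition~\ref{defiCSL} followed by conjugation by $c_P$ to handle relations $(R_3)$ with both terms outside $\CSLs$ --- are essentially those of Lemmas~\ref{step3} and~\ref{step2}, and your case analysis for $(R_2)$ with $X\in\CSLs$ (reduction to $(R_{2.b})$, to $(R_{1.a})$, or to the definition of $\psi$) replays Lemma~\ref{step1}. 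What the paper's route buys is Theorem~\ref{ThmA} itself, which also covers sections that are not cross sections and is needed elsewhere (type $D_{2n+1}$); what your route buys is a self-contained proof of Theorem~\ref{ThmA2} in which the vacuity of the $\CSLs$--$\CSLs$ commutation case and the identification $\Psi(\omega_X(Y))=(X,Y)$ are made explicit rather than parenthetical, together with an explicit check of the easy direction ($\phi$ well defined) that Tietze equivalence renders automatic in the paper. One caveat applies equally to both arguments: minimality rests on the claim that a cross section is a transversal for $\equiv$, so that $\vert\CSLs\vert=m$ and Proposition~\ref{ThmA3}(ii) applies; you assert this, exactly as the paper does (``Clearly\dots'' after Definition~\ref{defiCSL}), and neither of you proves it.
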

 In Section~\ref{Section2} we prove  Proposition~\ref{ThmA3} and  Theorem~\ref{ThmBC}.
In Section~\ref{Section1} we prove  Theorem~\ref{ThmA2}. Indeed we provide a presentation for any section~$(\CSLs,\Phi)$; see Theorem~\ref{ThmA}.
In Section~\ref{annexepresent} we prove Proposition~\ref{propA}. In  particular for each finite irreducible Coxeter system that is not of type $E$  we provide a cross section or a transversal section, according to the proposition.

\section{Abelinanisation} \label{Section2}

In this section  we prove  Proposition~\ref{ThmA3} and  Theorem~\ref{ThmBC}. We  first recall some notation:  the binary relation $\equiv_0$ is  defined by $Y\equiv_0 Z$ is there exists $X$ so that $Z = \omega_X\,Y\, \omega_X$ in $W$. By $\equiv$ we denote the equivalence relation on $\FF$ defined as the transitive closure of the binary relation~$\equiv_0$.  We start with the proof of Proposition~\ref{ThmA3}. 

\begin{proof}[Proof of Proposition~\ref{ThmA3}]
Let $\WS$ be a Coxeter system. Consider  $\equiv$ and $\equiv_0$ as defined in the proposition. The  defining relations of Presentation~(\ref{pres1cactus}) fall into two categories:  torsion relations (those of type  ($R_1$)) and quadratic relations $c_Xc_Y = c_Zc_X$ (those of types~($R_2$) or~($R_3$)). In the latter case we have $Y\equiv_0Z$. Conversely, when $Y\equiv_0Z$ with $Y,Z$ distinct, there exits $X$ so that both~$Y,Z$ belong to $\Omega(X)$ and $\omega_X(Y) = Z$ with a defining relation  $c_Xc_Y = c_Zc_X$ of type $(R_2)$ or $(R_3)$.  So the abelianisation of $C\WS$ leads to identify any two $c_Y, c_Z$ so that $Y\,\equiv\,Z$.  The remaining no-trivial defining relations are those of type $(R_1)$ and commuting relations. So Point~(i) of  Proposition~\ref{ThmA3} holds. Point~(ii) follows immediately. We turn to the proof of Point~(iii).  Let $\Lambda$ be a transversal for the equivalence classes for $\equiv$. By Tiezte's result on Tiezte transformations~\cite{MagnusCS},  it is enough to prove that for any element $X$ of $\FFF$ which is not in $\CSLs$  there exists a relation $c_X= w$, where $w$ is a word on $\{c_X\mid X\in \Lambda\}$, that can been obtained as a consequence of the defining relations of Presentation~(\ref{pres1cactus}). By $\CSLs_0$ denote  the set of elements $X$ in $\FFF$ that either are in $\CSLs$ or satisfy the latter property.  Let $Z$ be in $\FFF$ and let us prove that $Z$ belongs to $\CSLs_0$. Since the equivalence relation $\equiv$ is the transitive closure of the binary relation $\equiv_0$ and $\CSLs\subseteq \CSLs_0$, we are reduce to prove that if $Y \equiv_0 Z$ and $Y\in \CSLs_0$, then $Z$ lies also in $\CSLs_0$. Let us prove it  by induction on the cardinality $m$ of $S\setminus Z$. For $m = 0$, we have $Z = S$. Since $Z$ is alone in its $\equiv$-class, it has to belong to $\CSLs$ and there is nothing to prove. Assume $m\geq 1$. If $Y= Z$ then, there is nothing to prove. So assume this is not the case. Then there exists $X$ in $\FFF$ so that $Z = \omega_X(Y)$ with $Y,Z\in \Omega_0(X)$ and $c_Xc_Y = c_Zc_X$ is a relation of type ($R_2$). Using the relation $c_X^2 = 1$ of type  ($R_1$) we get $c_Z = c_Xc_Yc_X$. But $Y,Z$ are distinct and included in $X$, so the cardinality  of $S\setminus X$ is smaller than $m$. By the induction hypothesis, $X$ belongs to $\CSLs_0$. Using the obtained relation  $c_Z = c_Xc_Yc_X$, we conclude that $Z$ is in $\CSLs_0$. Hense, $\CSLs_0 = \FFF$ and Point~(iii) is proved.    \end{proof}

Before proving Theorem~\ref{ThmBC}, let us recall some classical notions \cite{Mik2009}. The commutator $[g,h] $ of two elements $g,h$ in  a group~$G$ is  $[g,h] = ghg^{-1}h^{-1}$.  The  terms of the lower central series of  a group~$G$ are defined inductively by setting $\Gamma_1(G) = G$ and $\Gamma_{n+1}(G) = [G,\Gamma_n(G)]$,   that is the subgroup of $G$ generated by the set $\{[g,h] \mid g\in G ; h\in\Gamma_n(G)\}$ of commutators of $G$.  It is immediate by induction that $\Gamma_{n+1}(G)$ is  both a normal subgroup of $G$ and a subgroup of $\Gamma_{n}(G)$,  and that the quotient groups $\Gamma_{n}(G)/\Gamma_{n+1}(G)$ are abelian.  One says that the lower central series of group~$G$ stabilizes if there exists  some $n$ such that $\Gamma_{n+1}(G) = \Gamma_{n}(G)$,  that is  $\Gamma_{n}(G)/\Gamma_{n+1}(G) = 1$.  Clearly in this case, one has $\Gamma_{m}(G) = \Gamma_{n}(G)$ for any $m\geq n$. One says that $G$ is (simply) nilpotent if $\Gamma_{n}(G) = \{1\}$ for some $n$. One says that 
$G$ is residually nilpotent if $\cap_{n\geq 0} \Gamma_{n}(G) = \{1\}$.  For instance  consider the group~$G = \mathbb{Z}_2 * \mathbb{Z}_2 =  \langle \begin{array}{l} u,v \mid   u^2 =  v^2  = 1\end{array}\rangle$. One can compute by hand that  for any $n\geq 2$ one has $\Gamma_{n}(G)$ is generated by $(uv)^{2^{n-1}}$. So, the lower central series of $G$  does not stabilizes,  and $G$ is, therefore, not (simply) nilpotent. But it is residually nilpotent.

\begin{proof}[Proof of Theorem~\ref{ThmBC}]
Recall from the introduction that we have a morphism $C\WS\to W,\  c_X\mapsto \omega_X$ that is onto. Then, if $C\WS$ is abelian, so is $W$. Conversely, if $W$ is abelian then for any $X$ in $\FFF$ we have $\FFF = \Omega(X) \cup \{X\}$, which means that $\omega_X(Y) = Y$ for any $Y\in \FFF$. So for any two distinct $X,Y\in\FFF$, the relation $c_Xc_Y = c_Yc_X$ is a defining relation in Presentation~(\ref{pres1cactus}). Thus $C\WS$ is abelian.
Assume $W$ is not Abelian and let us prove~(ii).  We can assume $W$ irreducible without restriction: if $W = W_{X_1}\times \cdots \times W_{X_k}$, then $C\WS = C(W_{X_1},X_1) \times \cdots \times C(W_{X_k},X_k)$ and one of the terms of the decomposition is not abelian. Points (a) and (b) of (ii) immediately follow. Point (c) follows from the following property: if $G = G_1\times G_2$, then it is immediate that $\Gamma_n(G) = \Gamma_n(G_1)\times \Gamma_n(G_2)$ for any $n$.  Write $\mathbb{Z}_2 * \mathbb{Z}_2 =  \langle \begin{array}{l} u,v \mid   u^2 =  v^2  = 1\end{array}\rangle$. Let us prove  Points~(a) and~(b), first.  As a warm up we start with the  particular case where $W$ is  finite dihedral, that is of type $I_n$. Set $S = \{s,t\}$. We have  $C_\FFF = \{c_s, c_t, c_{\{s,t\}}\}$. If $m_{s,t}$ is even,  the presentation of $C\WS$  is  $\left\langle C_\FFF\left|  \begin{array}{l}  c^2_s =  c^2_t =  c^2_{\{s,t\}} = 1\\  c_s\, c_{\{s,t\}} =  c_{\{s,t\}} \,c_s \\ c_t\, c_{\{s,t\}} =  c_{\{s,t\}} \,c_t\\ \end{array}\right.\right\rangle$. If $m_{s,t}$ is odd, the presentation is  $\left\langle C_\FFF \left | \begin{array}{l}  c^2_s =  c^2_t =  c^2_{\{s,t\}} = 1\\  c_s\, c_{\{s,t\}} =  c_{\{s,t\}} \,c_t \\ c_t\, c_{\{s,t\}} =  c_{\{s,t\}} \,c_s\end{array}\right.\right\rangle$. In the first case,  one has a morphism that sends $c_s$ on $u$, $c_t$ on $v$ and $c_{\{s,t\}}$ on $1$. Clearly we get a section by setting  $u\mapsto c_s$ and $v\mapsto c_t$. Therefore, the subgroup of $C\WS$ generated by $c_s$ and $c_t$, that is~$C(W,\big\{\{s\},\{t\}\big\})$,  is isomorphic to $\mathbb{Z}_2 * \mathbb{Z}_2$ and we have the expected semi-direct product  $C\WS = G\rtimes C(W,\big\{\{s\},\{t\}\big\}))$, where $G$ is the kernel of the above morphism onto $\mathbb{Z}_2 * \mathbb{Z}_2$.  Actually, $G$ is generated by $c_S$ and $C\WS = C(W,\{S\}) \times C(W,\big\{\{s\},\{t\}\big\})$.  In the second case,  one can send $c_s$ on $u$, $c_{\{s,t\}}$ on $v$ and $c_t$ on $vuv$. Similarly to the previous case, the subgroup of $C\WS$ generated by $c_s$ and $c_{\{s,t\}}$, that  is~$C(W,\big\{\{s\},\{s,t\}\big\})$, is isomorphic to $\mathbb{Z}_2 * \mathbb{Z}_2$ and  we have a semi-direct product $G\rtimes  C(W,\big\{\{s\},\{s,t\}\big\})$ where $G$ is the kernel of the morphism. Actually, in this case, this kernel is trivial and $C\WS = C(W,\big\{\{s\},\{s,t\}\big\})$.  We come back to the general (irreducible non Abelian) case. Assume that either  we have $W$ of spherical type with $\omega_S$ central in $W$ or we have $W$ that is not of spherical type. Consider two distinct  elements $X,Y$ of  $\FFF\setminus \{S\}$ and maximal for the inclusion. Note that such a pair exists: take $X$ in $\FFF$ distinct from $S$ and maximal. Consider $y\in S\setminus X$.  Then $\{y\}$ belongs to $\FF$ and there exists a maximal element of $\FFF\setminus \{S\}$ that contains $y$.  Then,  we can conclude as for the even dihedral case: considerer the morphism  from $C\WS$ onto $\mathbb{Z}_2 * \mathbb{Z}_2$ that sends every element $c_Z$ of the generating set on $1$,  except $c_X,c_Y$ that are sent on $u$ and $v$ respectively.  Assume  finally $W$ is of spherical type  and $\omega_S$ is not central in $W$.  Considering the classification of irreducible finite Coxeter groups  (see Section~\ref{annexepresent}), in addition to the odd dihedral case, there is only three possible cases: $W$ is of type $A$, $D_{2n+1}$ or $E_6$. In each case, there exist two distinct  maximal proper irreducible parabolic subgroups $W_X$ and $W_Y$  so that $X\omega_S  = \omega_S Y$  and we can conclude as for the the odd dihedral case:  considerer the morphism  from $C\WS$ onto $\mathbb{Z}_2 * \mathbb{Z}_2$ that sends every element $c_Z$ of the generating set on $1$,  except $c_X,c_Y$ and $c_S$ that are sent on $u$ and $vuv$ and $v$, respectively. So in any cases, Points~(a) and~(b) hold.  Let us now prove Point~(c). If $\varphi: G \to H$ is a morphism of groups, it is obvious by induction that for any $n$ one has $\varphi(\Gamma_n(G)) \subseteq \Gamma_n(H)$ and we have induced morphisms $\varphi$ from $\Gamma_{n}(G)$ and  $\Gamma_{n}(G)/\Gamma_{n+1}(G)$ to $\Gamma_{n}(H)$  and  $\Gamma_{n}(H)/\Gamma_{n+1}(H)$, respectively. When moreover  the morphism $\varphi: G \to H$ is onto, then so are the induced morphisms. Therefore if $G$ is (simply) nilpotent, so is $H$ ;  if  the lower central series  of $G$ stabilizes, then the one of $H$ stabilizes too.
 But as seen above the proof of Theorem~\ref{ThmBC}, the lower central series of  $\mathbb{Z}_2 * \mathbb{Z}_2$ does not stabilize. Thus, we are done. \end{proof}

As far as we know the question of whether $C\WS$ is residually nilpotent  is open, even in the case of the classical cactus group $J_n$. We remark that the answer is positive for the (dihedral) Coxeter group of type $I_n$, since, as seen along the above proof,  in this case $C\WS$ is either $(\mathbb{Z}_2 * \mathbb{Z}_2)\times \mathbb{Z}_2$ or $\mathbb{Z}_2 * \mathbb{Z}_2$, depending whether $n$ is even or odd. 

\section{Cross section} \label{Section1}
For all the section we fix  a Coxeter system $\WS$ and a section $(\CSLs,\Psi)$ for $W$.
 The proof of Theorem~\ref{ThmA2} (and of Theorem~\ref{ThmA} below) is an  application of Tiezte's result on Tiezte transformations~\cite{MagnusCS}.  Indeed, under the hypothesese of the theorem,  for all $Z$ in $\FFF$, there exist $X,Y \in \CSL$ with $Y\subseteq X$ so that $\omega_X(Y) = Z$, namely $X = \overline{Z}$ and $Y = \rond{Z}$.  So  we have $c_{\overline{Z}}\,c_{\rond{Z}}= c_Z\,c_{\overline{Z}}$ and, equivalently, $c_{\overline{Z}}\,c_{\rond{Z}}\,c^{-1}_{\overline{Z}} = c_Z$. This means that  the set  $\CSLs$ generates $C\WS$. Using Tiezte transformations, from the defining presentation~(\ref{pres1cactus}) of $C\WS$,  we are going to deduce a finite presentation of $C\WS$ with  $\CSLs$ for generating set.

 Note that the relations  $c_{\overline{Z}}\,c_{\rond{Z}}= c_Z\,c_{\overline{Z}}$ belong to relations of type (R2), except if $\rond{Z} =\overline{Z}$.  This latter case  only happen for $Z$ in $\CSLs$.  Among the relations  $c_X c_Y =  c_{\omega_X(Y)} c_X$ of type ($R_2$)  we call of type~($\widehat{R}_{2}$) those such that $X$ belongs to~$\CSLs$ and precisely only one among  $Y$ and $\omega_X(Y)$ belongs to~$\CSLs$.  We call of type~($\widehat{R}_{2.c}$) any relation $c_X c_Yc_X =  c_{\omega_X(Y)}$ with $Y\in \Omega_0(X)$  and such that both $X$ and $Y$ belong to~$\CSLs$ but $\omega_X(Y)$ does not. 

\begin{lm}\label{lemme1} 
\begin{enumerate}
\item For $Y\subseteq X$ in $\FFF$, one has  $\omega_X(\omega_Y) = \omega_{\omega_X(Y)}$;
\item For any $Z\subseteq Y\subseteq X$ in $\FFF$,  one has  $\omega_X(\omega_Y(Z)) = \omega_{\omega_X(Y)}(\omega_X(Z))$
\end{enumerate}
\end{lm}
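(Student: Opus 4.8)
The plan is to interpret both identities as equalities of permutations of $S$, obtained by conjugating the longest-element involutions. Recall from the setup that for $X\in\FFF$ the element $\omega_X\in W$ permutes $X$ by conjugation and has order $2$, and that the bijection $\omega_X$ on $\OWS X$ is defined by $\omega_X(Y)=\omega_X Y\omega_X$. The key structural fact I would lean on is the already-asserted relation $\omega_X\omega_Y\omega_X=\omega_{\omega_X(Y)}$ for $Y\in\OWS X$ (indeed part (i) is exactly this, so I treat it as the thing to prove). So for (i) the whole content is: conjugating the canonical involution $\omega_Y$ by $\omega_X$ lands on the canonical involution of the conjugated parabolic $\omega_X(Y)=\omega_X Y\omega_X$.

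For (i), first I would note that since $Y\subseteq X$ and $W_X$ is finite irreducible, $Y\in\OWSs X\subseteq\OWS X$, so $Z:=\omega_X(Y)=\omega_X Y\omega_X$ is again in $\FFF$, and the element $\omega_X\omega_Y\omega_X$ is visibly an involution (it is a conjugate of the involution $\omega_Y$). The decisive step is to check it equals $\omega_Z$, the distinguished element that normalizes $W_Z$ and permutes the generating set $Z$ by conjugation. I would argue that $\omega_X\omega_Y\omega_X$ lies in $W_Z$: indeed $W_Z=\omega_X W_Y\omega_X$ since conjugation by $\omega_X$ carries the generating set $Y$ of $W_Y$ to the generating set $Z$ of $W_Z$, hence carries $W_Y$ isomorphically onto $W_Z$. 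Under this isomorphism $\omega_Y$ is the unique nontrivial element permuting $Y$ by conjugation, so its image $\omega_X\omega_Y\omega_X$ is the unique nontrivial element permuting $Z=\omega_X(Y)$ by conjugation; by the uniqueness characterization recalled in the introduction, that image is exactly $\omega_Z=\omega_{\omega_X(Y)}$. This uniqueness/transport-of-structure argument is the heart of the matter.

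For (ii), the plan is to reduce it to (i) by a short chain of rewrites. Given $Z\subseteq Y\subseteq X$, apply (i) twice: on one hand $\omega_X(\omega_Y(Z))=\omega_X\,(\omega_Y Z\omega_Y)\,\omega_X=(\omega_X\omega_Y\omega_X)(\omega_X Z\omega_X)(\omega_X\omega_Y\omega_X)$. Now substitute $\omega_X\omega_Y\omega_X=\omega_{\omega_X(Y)}$ from part (i) and observe that $\omega_X Z\omega_X=\omega_X(Z)$ by definition of the bijection. This rewrites the expression as $\omega_{\omega_X(Y)}\,\omega_X(Z)\,\omega_{\omega_X(Y)}$, which is precisely $\omega_{\omega_X(Y)}(\omega_X(Z))$ by the definition of the bijection $\omega_{\omega_X(Y)}$. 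The only thing to verify for these substitutions to be legal is that the relevant inclusions are preserved, namely that $\omega_X(Z)\subseteq\omega_X(Y)$, which is immediate because conjugation by $\omega_X$ is an inclusion-preserving map on subsets of $S$ (it is a restriction of a bijection of $S$).

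I expect the main obstacle to be entirely in part (i): pinning down rigorously that conjugation by $\omega_X$ sends the generating set $Y$ into $S$ \emph{and} that the resulting subset $Z$ has its own distinguished involution given by the conjugate. The containment $Z\subseteq S$ is guaranteed by $Y\in\OWS X$, and irreducibility plus finiteness of $W_Z\cong W_Y$ guarantees that $\omega_Z$ exists and is characterized by uniqueness; so the real work is just to phrase the transport-of-structure cleanly. Part (ii) is then a purely formal substitution with no genuine difficulty beyond bookkeeping the inclusions.
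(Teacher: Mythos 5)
Your proof is correct. For part (i) you follow essentially the same route as the paper: both arguments observe that $\omega_X\omega_Y\omega_X$ is a nontrivial element of $W_{\omega_X(Y)}$ that permutes $\omega_X(Y)$ by conjugation, and then invoke the uniqueness of the canonical involution of a finite irreducible parabolic subgroup to conclude $\omega_X\omega_Y\omega_X=\omega_{\omega_X(Y)}$; your ``transport of structure'' phrasing is just a cleaner packaging of that argument. For part (ii), however, you take a genuinely different and shorter path. You conjugate the \emph{subset} $Z$ directly: inserting $\omega_X^2=1$ gives $\omega_X(\omega_Y(Z))=(\omega_X\omega_Y\omega_X)\,(\omega_X Z\omega_X)\,(\omega_X\omega_Y\omega_X)$, and a single application of (i) plus the definition $\omega_X(Z)=\omega_X Z\omega_X$ finishes the computation. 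The paper instead works at the level of group elements: it uses (i) repeatedly to establish the identity $\omega_{\omega_X(\omega_Y(Z))}=\omega_{\omega_{\omega_X(Y)}(\omega_X(Z))}$ between canonical involutions, and then needs an additional ingredient to descend from elements back to subsets, namely the support property of Coxeter groups (any two reduced words for the same element use the same letters, hence $\supp{\omega_A}=A$, so $\omega_A=\omega_B$ forces $A=B$). Your route dispenses with the support fact entirely, which makes it more elementary; what the paper's detour buys is the element-level identity as a by-product, but that identity also follows at once from your set-level identity together with (i), so nothing substantive is lost. One nitpick: your justification of $\omega_X(Z)\subseteq\omega_X(Y)$ (``conjugation by $\omega_X$ is a restriction of a bijection of $S$'') is slightly off, since conjugation by $\omega_X$ permutes $X$ but need not stabilize $S$; the inclusion is nonetheless trivially preserved because taking images under any conjugation map preserves containment, so this imprecision costs nothing.
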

\begin{proof} 

Recall also that for any $Z\in \FFF$, the element $\omega_Z$ is the unique non trivial element of $W_Z$ that permutes $Z$  by conjugacy in $W$. Since $\omega_X$ permutes $X$ and $\omega_Y$ lies in~$W_Y$,  we have $\omega_X(Y) \subseteq X$ and $\omega_X(\omega_Y)$ is a no trivial element in  $W_{\omega_X(Y)}$. Since $\omega_Y$ permutes $Y,$ the element $\omega_X(\omega_Y)$ must permut $\omega_X(Y)$.  So   $\omega_X(\omega_Y) = \omega_{\omega_Y(Z)}$. This proves Point~(i). Point~(ii) is proven by repeatedly applying Point~(i):  Assume $Z \subseteq Y\subseteq X$. Then $\omega_Y(Z) \subseteq Y$ and 
$\omega_{\omega_X(Y)}(\omega_X(\omega_Z)) = \omega_X(\omega_Y)(\omega_X(\omega_Z)) = (\omega_X\omega_Y\omega_X)(\omega_X\omega_Z\omega_X)(\omega_X\omega_Y\omega_X) =\omega_X\omega_Y\omega_Z\omega_Y\omega_X  = \omega_X(\omega_Y(\omega_Z)) = \omega_X(\omega_{\omega_Y(Z)}) =\omega_{\omega_X(\omega_Y(Z))}$. On the other hand, $\omega_{\omega_X(Y)}(\omega_X(\omega_Z)) = \omega_{\omega_X(Y)}(\omega_{\omega_X(Z)}) =  \omega_{\omega_{\omega_X(Y)}(\omega_X(Z))}$. So 
$\omega_{\omega_X(\omega_Y(Z))} = \omega_{\omega_{\omega_X(Y)}(\omega_X(Z))}$. But  for a given  Coxeter system $\WS$, any two minimal length representative words of the same element  of $W$ are written on the same letters (see \cite{Bou} for instance). This set of letters is called the support of the element. In particular,  for any~$X\in\FFF$ the support of $\omega_X$ is $X$. So,  $\omega_X(\omega_Y(Z)) = \omega_{\omega_X(Y)}(\omega_X(Z))$.

\end{proof}

\begin{lm}[Step 1] \label{step3} We still have a presentation of $C\WS$ by removing from the presentation~(\ref{pres1cactus})  all the relations of type ($R_2$) so that $X$ is not in $\CSLs$.
\end{lm}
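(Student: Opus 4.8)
The plan is to invoke Tietze's theorem: it suffices to show that every relation of type $(R_2)$ whose outer index $X$ does not belong to $\CSLs$ is a consequence of the retained relations, namely all relations of types $(R_1)$ and $(R_3)$ together with those of type $(R_2)$ whose outer index \emph{does} lie in $\CSLs$. So I would fix such a relation $c_X c_Y = c_{\omega_X(Y)}c_X$ with $Y\in\OWSs{X}$ and $X\notin\CSLs$; using $(R_1)$ it is equivalent to $c_Xc_Yc_X = c_{\omega_X(Y)}$, which is what I would establish. Write $a = \overline X$ and $b = \rond X$; by the section map both lie in $\CSLs$ and $\omega_a(b)=X$. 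Since $a\in\CSLs$ but $X\notin\CSLs$ we must have $b\subsetneq a$ (were $b=a$, then $X=\omega_a(a)=a\in\CSLs$), and moreover $X=\omega_a(b)\subsetneq a$. Thus $b\in\OWSs a$, so $c_ac_b = c_Xc_a$ is a retained relation and, using $c_a^2=1$, I obtain the key substitution $c_X = c_ac_bc_a$.

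Next I would run a threefold conjugation cascade. Substituting and grouping,
\[
c_Xc_Yc_X = c_ac_b\,(c_ac_Yc_a)\,c_bc_a .
\]
Since $Y\subsetneq X\subsetneq a$ we have $Y\in\OWSs a$, so $c_ac_Yc_a = c_{\omega_a(Y)}$ is retained; set $Y_1=\omega_a(Y)$. Because conjugation by $\omega_a$ preserves strict inclusions and $\omega_a(X)=b$, the relation $Y\subsetneq X$ gives $Y_1\subsetneq b$, hence $Y_1\in\OWSs b$ and $c_bc_{Y_1}c_b = c_{\omega_b(Y_1)}$ is retained; set $Y_2=\omega_b(Y_1)$. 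Finally $Y_2\subseteq b\subsetneq a$ gives $Y_2\in\OWSs a$, so $c_ac_{Y_2}c_a = c_{\omega_a(Y_2)}$ is retained. Chaining these,
\[
c_Xc_Yc_X = c_a c_b c_{Y_1} c_b c_a = c_a c_{Y_2} c_a = c_{\omega_a(\omega_b(\omega_a(Y)))} .
\]
Every relation used here has outer index $a$ or $b$, both in $\CSLs$, so all are retained.

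It remains to identify the index. By Lemma~\ref{lemme1}(i) applied to $b\subseteq a$ one has $\omega_a\omega_b\omega_a = \omega_{\omega_a(b)} = \omega_X$, so the composite conjugation equals conjugation by $\omega_X$; that is, $\omega_a(\omega_b(\omega_a(Y))) = \omega_X(Y)$. Therefore $c_Xc_Yc_X = c_{\omega_X(Y)}$, as required. Since this holds for every $X\notin\CSLs$, each removed relation lies in the normal closure of the retained relations, and Tietze's theorem lets me delete them all. The only real bookkeeping—and the step most prone to error—is verifying the chain of strict inclusions $Y_1\subsetneq b$ and $Y_2\subsetneq a$ that guarantees each intermediate relation is a genuine retained relation of type $(R_2)$ rather than a degenerate one; the algebraic cascade and the index identification are then immediate from the substitution $c_X=c_ac_bc_a$ and Lemma~\ref{lemme1}.
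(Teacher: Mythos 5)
Your proof is correct and follows essentially the same route as the paper: both substitute $c_X = c_{\overline{X}}\,c_{\rond{X}}\,c_{\overline{X}}$ via the retained relation with outer index $\overline{X}\in\CSLs$, reduce the removed relation to conjugation relations whose outer indices are $\overline{X}$ and $\rond{X}$ (both in $\CSLs$), and identify the resulting index via Lemma~\ref{lemme1}, i.e.\ $\omega_{\overline{X}}\omega_{\rond{X}}\omega_{\overline{X}} = \omega_X$. Your bookkeeping of the strict inclusions ($\rond{X}\subsetneq\overline{X}$, $Y_1\subsetneq\rond{X}$, $Y_2\subsetneq\overline{X}$) is in fact slightly more careful than the paper's, which asserts the corresponding memberships in $\Omega_0(\cdot)$ without detailed justification.
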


\begin{proof}  Consider a relation $c_X c_Y =  c_{\omega_X(Y)} c_X$ of type ($R_2$)  that appears in the presentation~(\ref{pres1cactus})  where $X$ is not in $\CSLs$.  Since $X$ is not in $\CSLs$, it  has to belong to $\Omega_0(\overline{X})$ and the relation $c_{\overline{X}}c_{\rond{X}} = c_Xc_{\overline{X}}$ is of type $R_{2}$ with $\overline{X}$ in $\CSLs$. Moreover, both $Y$ and $\omega_X(Y)$  are included in $X$,  and so have to belong to $\Omega_0(\overline{X})$. Then he relations $c_{\overline{X}} c_Y= c_{\omega_{\overline{X}}(Y)}c_{\overline{X}} $ and $c_{\overline{X}}c_{\omega_X(Y)}=   c_{\omega_{\overline{X}}(\omega_X(Y))} c_{\overline{X}}$  are of type ($R_2$).   Using the relations of type $(R_{1})$, we see that the relation $c_X c_Y =  c_{\omega_X(Y)} c_X$ is equivalent to the relation $c_{\rond{X}}c_{\overline{X}} c_Yc_{\overline{X}} =  c_{\overline{X}}c_{\omega_X(Y)}c_{\overline{X}} c_{\rond{X}}$, which in turn is equivalent to  the relation $c_{\rond{X}}c_{\omega_{\overline{X}}(Y)} =  c_{\omega_{\overline{X}}(\omega_X(Y))} c_{\rond{X}}$ thanks to the two above relations (and relations of type ($R_1$)).  This relation can be written as $c_{\rond{X}}c_{\omega_{\overline{X}}(Y)} =  c_{\omega_{\omega_{\rond{X}}}(\omega_{\overline{X}}(Y))} c_{\rond{X}}$  by Lemma~\ref{lemme1}. By assumption $Y$ belongs  to $\Omega_0(X)$. This imposes  that $\omega_{\overline{X}}(Y)$ belongs to $\Omega_0(\rond{X})$. So the latter relation is of type ($R_2$) with $X_0$ in~$\CSLs$.  \end{proof}

\begin{lm}[Step 2] \label{step1} Starting with the  the presentation obtained at Step 1, we still have a presentation of $C\WS$ by removing  all the relations $c_X^2=1$ of type ($R_{1}$) with $X\in \FFF\setminus \Lambda$ and  by replacing the set of relations of type~$(\widehat{R}_{2})$ with the set of relations of type~$(\widehat{R}_{2.c})$. 
\end{lm}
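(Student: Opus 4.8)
The plan is to apply Tietze's theorem once more: starting from the presentation obtained at Step~1 (Lemma~\ref{step3}), I will show that each relation I propose to delete is a consequence of the relations I keep, and that the surviving $(\widehat R_2)$ relations coincide, after a trivial rewriting through $c_X^2=1$, with the relations $(\widehat R_{2.c})$. Two families must be eliminated. First, the torsion relations $c_X^2 = 1$ with $X \in \FFF\setminus\CSLs$. Second, among the relations of type $(\widehat R_2)$ (those $c_X c_Y = c_{\omega_X(Y)}c_X$ with $X\in\CSLs$ and exactly one of $Y,\omega_X(Y)$ in $\CSLs$), I must discard the relations where $Y\notin\CSLs$ and $\omega_X(Y)\in\CSLs$ (call these Case~C), keeping only those where $Y\in\CSLs$ and $\omega_X(Y)\notin\CSLs$ (Case~B); rewritten through $c_X^2=1$ the latter are exactly the relations $(\widehat R_{2.c})$. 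The relations where $Y$ and $\omega_X(Y)$ are both in $\CSLs$, or both outside, are not of type $(\widehat R_2)$ and are left untouched.

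For the first family, fix $X\in\FFF\setminus\CSLs$ and recall from Definition~\ref{defiCSL} that $\rond X\subseteq\overline X$ with $\rond X,\overline X\in\CSLs$ and $\omega_{\overline X}(\rond X)=X$. Since $X\notin\CSLs$ I first note $\rond X\subsetneq\overline X$: otherwise $\rond X=\overline X$ would force $X=\omega_{\overline X}(\overline X)=\overline X\in\CSLs$, a contradiction. Hence $\rond X\in\OWSs{\overline X}$ and the relation $c_{\overline X}c_{\rond X}=c_X c_{\overline X}$ is of type $(\widehat R_{2.c})$, so it is retained. Combined with the retained relation $c_{\overline X}^2=1$ it yields $c_X=c_{\overline X}c_{\rond X}c_{\overline X}$, whence $c_X^2=c_{\overline X}c_{\rond X}c_{\overline X}c_{\overline X}c_{\rond X}c_{\overline X}=c_{\overline X}c_{\rond X}^2c_{\overline X}=c_{\overline X}^2=1$, the last two equalities using $c_{\rond X}^2=1$ and $c_{\overline X}^2=1$, both retained since $\rond X,\overline X\in\CSLs$. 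Thus $c_X^2=1$ follows from relations we keep and may be deleted.

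For the second family, observe that $\omega_X$ is an involution on $\OWSs X$, so it pairs each Case~B relation for $(X,Y)$ with the Case~C relation for $(X,\omega_X(Y))$, and conversely; this is a bijection between Case~B and Case~C relations, so it suffices to derive each Case~C relation from its mate. Writing $Z=\omega_X(Y)\in\CSLs$ for a Case~C relation $c_X c_Y=c_Z c_X$ (so that $Y=\omega_X(Z)\notin\CSLs$), its mate is the Case~B relation $c_X c_Z=c_{\omega_X(Z)}c_X=c_Y c_X$; conjugating this by $c_X$, which is legitimate as $X\in\CSLs$ keeps $c_X^2=1$, returns $c_Z c_X=c_X c_Y$, i.e.\ the Case~C relation. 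Hence the Case~C relations are redundant and may be removed, leaving the Case~B relations that, in the form $c_X c_Y c_X=c_{\omega_X(Y)}$, are precisely the $(\widehat R_{2.c})$ relations.

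Finally I would record that neither deletion invokes any relation slated for removal: killing $c_X^2=1$ with $X\notin\CSLs$ uses only a $(\widehat R_{2.c})$ relation and torsion relations indexed by $\CSLs$, while eliminating a Case~C relation uses only its Case~B mate and a torsion relation indexed by $\CSLs$. There is therefore no circular dependence, both families may be deleted at once, and the resulting presentation is the announced one. The only point demanding care — and the main potential obstacle — is verifying that the $(R_2)$ relation used to annihilate $c_X^2=1$ genuinely survives, that is, that it is really of type $(\widehat R_{2.c})$; this is exactly what the strict inclusion $\rond X\subsetneq\overline X$ secures, together with the fact that $\rond X$ always lies in $\CSLs$ by the definition of the section map.
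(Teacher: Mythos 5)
Your proof is correct and takes essentially the same route as the paper's: you derive each discarded torsion relation $c_X^2=1$ ($X\notin\CSLs$) from the retained relation $c_{\overline{X}}c_{\rond{X}}c_{\overline{X}}=c_X$ of type $(\widehat{R}_{2.c})$ together with $c_{\overline{X}}^2=c_{\rond{X}}^2=1$, and you observe that every relation of type $(\widehat{R}_{2})$ is interchangeable, modulo $c_X^2=1$ with $X\in\CSLs$, with a relation of type $(\widehat{R}_{2.c})$ --- exactly the paper's two steps. Your additional bookkeeping (the strict inclusion $\rond{X}\subsetneq\overline{X}$, the bijective pairing of your Cases B and C, and the absence of circular dependence among the deletions) only makes explicit what the paper leaves implicit.
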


\begin{proof} All generators have order $2$ by relations of type ($R_1$)  and  all $\omega_X$ have also of order~$2$. Therefore any relation $c_X c_Y =  c_{\omega_X(Y)} c_X$  of  type~($R_2$) is equivalent to the relation $c_Xc_{\omega_X(Y)} =  c_Y c_X$, of  type~($R_2$),  to the relation~$c_X c_Yc_X =  c_{\omega_X(Y)}$  and to the relation~$c_Xc_{\omega_X(Y)}c_X =  c_Y$, using relations of type ($R_1$) only.  So any relation of type~($\widehat{R}_2$) is  equivalent to a relation of type~($\widehat{R}_{2.c}$) using relations of type ($R_1$). Thereby, the set of relations of type ($\widehat{R}_2$)  can be replace with the set of relations  of type~($\widehat{R}_{2.c}$) in  the presentation (\ref{pres1cactus}). Now, when $X$ is not in $\CSLs$, the relation $c_X^2=1$ follows from the relation $c_{\rond{X}}^2=1$ using the relation $c_{\overline{X}}^2=1$ and  the relation $c_{\overline{X}}\,c_{\rond{X}}c_{\overline{X}} = c_X $, that is  of type~($\widehat{R}_{2.c}$) . 
\end{proof}
 
\begin{lm}[Step 3]\label{step2} Among the relations of type ($R_3$) all the relations so that  neither $X$ nor $Y$ belongs to $\CSLs$ can be removed  from the presentation obtained at Step 2. We still have a presentation of $C\WS$ by replacing    the remaining relations of type ($R_3$) with the relations of type ($R_{3.a}$) and ($R_{3.b}$).  
\end{lm}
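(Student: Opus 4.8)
The plan is to sort the type~($R_3$) relations $c_Ac_B=c_Bc_A$ according to how many of $A,B$ lie in $\CSLs$, exploiting two facts from the previous steps: every generator is an involution, so such a relation is equivalent to $(c_Ac_B)^2=1$; and for every $W\in\FFF\setminus\CSLs$ the relation $c_W=c_{\overline W}c_{\rond W}c_{\overline W}$ of type~($\widehat R_{2.c}$) is available. When $A,B\in\CSLs$ the relation is already written on $C_\CSLs$ and I keep it, calling it of type~($R_{3.a}$). When exactly one lies outside, say $A=Z\in\CSLs$ and $B\notin\CSLs$, I substitute $c_B=c_{\overline B}c_{\rond B}c_{\overline B}$ into $(c_Zc_B)^2=1$; since $B\cup Z$ is not irreducible and $B\neq Z$ we have $B\in\OWSp Z\setminus\CSLs$ with $\Psi(B)=(\overline B,\rond B)$, so the resulting relation $(c_{\overline B}c_{\rond B}c_{\overline B}c_Z)^2=1$ is precisely of type~($R_{3.b}$). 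As this substitution is reversible modulo the available relation $c_B=c_{\overline B}c_{\rond B}c_{\overline B}$, replacing the former by the latter is a legitimate Tietze transformation.

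The substance of the lemma is the redundancy of the relations with $A,B\notin\CSLs$, and here I would use condition~(c) in the definition of a section map (Definition~\ref{defiCSL}). Since $A\cup B$ is not irreducible, it furnishes an $X\in\CSLs$ with $A,B\in\OWS X$ and, after possibly exchanging $A$ and $B$, with $A':=\omega_X(A)\in\CSLs$. First I record that $A\in\OWSs X$: if instead $A\in\OWSp X$ then $\omega_X$ would fix $A$, giving $A=A'\in\CSLs$, contrary to $A\notin\CSLs$. Since $\omega_X$ stabilises $\OWSs X$, also $A'\in\OWSs X$, so the conjugation relation $c_Xc_Ac_X=c_{A'}$ is available (it was retained at Step~1 because $X\in\CSLs$). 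Writing $B':=\omega_X(B)$, conjugation by $\omega_X$ is a group automorphism carrying the Coxeter generating set $A\cup B$ bijectively onto $A'\cup B'\subseteq S$ with the same $m$-values, hence the diagram on $A'\cup B'$ is isomorphic to that on $A\cup B$; in particular $A'\cup B'$ is not irreducible and, as $A\neq B$ forces $A'\neq B'$, the relation $(c_{A'}c_{B'})^2=1$ is again of type~($R_3$) with $A'\in\CSLs$, hence one of the relations of type~($R_{3.a}$) or~($R_{3.b}$) retained above.

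It then remains to transport $(c_{A'}c_{B'})^2=1$ back to $(c_Ac_B)^2=1$ by conjugating with $c_X$. If $B\in\OWSs X$ then $B'\in\OWSs X$ and $c_Xc_Bc_X=c_{B'}$ is available; conjugating the retained relation $(c_{A'}c_{B'})^2=1$ by $c_X$ and cancelling the squares $c_X^2=1$ yields exactly $(c_Ac_B)^2=1$. If $B\in\OWSp X$ then $\omega_X$ fixes $B$, so $B'=B$, and the two retained type~($R_3$) relations $c_Xc_B=c_Bc_X$ and $c_{A'}c_B=c_Bc_{A'}$ (both having a member in $\CSLs$) let me push $c_A=c_Xc_{A'}c_X$ past $c_B$ and recover $c_Ac_B=c_Bc_A$. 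In either subcase the removed relation is deduced from relations that are kept, so no circularity occurs.

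The step I expect to be the main obstacle is this redundancy argument, for two reasons. One must guarantee that the conjugation relations $c_Xc_Ac_X=c_{A'}$ and $c_Xc_Bc_X=c_{B'}$ are genuinely available, which is why the membership of $A,B$ in $\OWSs X$ versus $\OWSp X$ has to be analysed rather than assumed; and one must check that the relation into which the removed one is transported carries a member of $\CSLs$, so that it is of type~($R_{3.a}$) or~($R_{3.b}$) and not itself a discarded relation. The only genuinely new combinatorial point, beyond the facts recalled in the introduction that $\omega_X$ stabilises $\OWSs X$ and fixes $\OWSp X$ pointwise and beyond Lemma~\ref{lemme1}, is that conjugation by $\omega_X$ preserves the non-irreducibility of $A\cup B$.
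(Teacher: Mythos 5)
Your proposal is correct and takes essentially the same route as the paper: both invoke condition (c) of Definition~\ref{defiCSL} to produce an element $X_0\in\CSLs$, conjugate the offending commutation relation by $c_{X_0}$ so that it becomes a retained relation having a member in $\CSLs$, and then convert the surviving ($R_3$) relations into types ($R_{3.a}$) and ($R_{3.b}$) via the substitution $c_Y = c_{\overline{Y}}c_{\rond{Y}}c_{\overline{Y}}$. Your additional case analysis (forcing $A\in\OWSs{X}$, then splitting on $B\in\OWSs{X}$ versus $B\in\OWSp{X}$) is handled in the paper by one uniform computation with the relations $c_{X_0}c_Xc_{X_0}=c_{\omega_{X_0}(X)}$, valid whichever part of $\OWS{X_0}$ contains $X$, so your argument is merely a more detailed rendering of the same idea.
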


\begin{proof} Let $X,Y$ be in $\FFF$ with $Y\in \OWSp{X}$. Consider the corresponding relation  $c_Y c_X = c_X c_Y$ of type ($R_3$). By property~(i)(b) in Definition~\ref{defiCSL}, there is $X_0$ in $\CSLs$ so that $X,Y$ lie in $\OWS {X_0}$ and $\{\omega_{X_0}(X), \omega_{X_0}(Y)\}\cap \CSLs \neq\emptyset$.  The relations  $c_{X_0}c_Y = c_{\omega_{X_0}(Y)}c_{X_0}$ and $c_{X_0}c_X = c_{\omega_{X_0}(X)}c_{X_0}$  are relations of type ($R_3$) in Presentation~(\ref{pres1cactus}).  Up to exchange $X$ and $Y$,  we can without restriction assume that $\omega_{X_0}(X)$ lies in  $\CSLs$. On the other hand,~$Y\in \OWSp{X}$. This implies that we have also $\omega_{X_0}(Y)\in \OWSp{\omega_{X_0}(X)}$. Therefore, the relation $c_{\omega_{X_0}(Y)} c_{\omega_{X_0}(X)} = c_{\omega_{X_0}(X)} c_{\omega_{X_0}(Y)}$ is a relation  of type ($R_3$), too.  Now, it is immediate that the relation $c_Y c_X = c_X c_Y$ can be deduce from  the three above relations: $c_{X_0}c_{\omega_{X_0}(Y)} c_{\omega_{X_0}(X)}c_{X_0} = c_{X_0}c_{\omega_{X_0}(Y)} c_{X_0}c_{X} = c_{X_0}c_{X_0}c_{Y}c_{X} =c_{Y}c_{X}$ and, similarly  $c_{X_0}c_{\omega_{X_0}(X)} c_{\omega_{X_0)}(Y))}c_{X_0} = c_{X}c_{Y}$.  This proves the first part of the statement.  Now, consider the relation of  $c_Y c_X = c_X c_Y$ of type ($R_3$) with $X$ in $\Lambda$.  If $Y$ is also in $\Lambda$, then,  using the relations of type ($R_{1.a}$), the relation $c_Y c_X = c_X c_Y$  can be replace  with the relation $(c_Y c_X)^2 = 1$  which is of type ($R_{3.a}$).
 If $Y$ is not in $\Lambda$, then $c_Y c_X = c_X c_Y$  can be replace  with the relation   $c_{\overline{Y}}c_{\rond{Y}}c_{\overline{Y}} c_X = c_X c_{\overline{Y}}c_{\rond{Y}}c_{\overline{Y}}$, which can, in turn, be replace with $(c_{\overline{Y}}c_{\rond{Y}}c_{\overline{Y}} c_X)^2 =1$, which is a relation of type ($R_{3.b}$). On the other hand, all relations of type~($R_{3.a}$)  or of type~($R_{3.b}$) are obtained in this way. 
\end{proof}

% Among the relations so that  $X$ belongs to $\CSLs$ and only one among  and $Y$ and $\omega_X(Y)$ belongs to $\CSLs$,  by the previous remark, we can keep only those so that $Y$ belong to $\CSLs$. We say that this relations are of type~($R_{2.c}$).   If only $X$  belongs to $\CSLs$, then  the relations  $c_{\overline{Y}} c_{Y} =  c_{\rond{Y}} c_{\overline{Y}}$ and $c_{\overline{\omega_X(Y)}} c_{\omega_X(Y)} =  c_{\rond{\omega_X(Y)}} c_{\overline{\omega_X(Y)}}$ are of type ${R_{2.c}}$

\begin{thm}\label{ThmA} Let $\WS$ be a Coxeter system and $(\CSLs,\Psi)$ be a section. Then $C\WS$ possesses the following group presentation: 
$$\left\langle C_\CSLs \left | \begin{array}{cccl}
 (R_{1.a})&c_X^2=1&;&X\in \CSLs\\
 (R_{2.a})& c_Xc_Y =c_{\omega_X(Y)} c_X &;&  X,Y,\omega_X(Y)\in \CSLs  \textrm{ with } Y\in \Omega_0(X)\\
 (R_{2.b})& c_Xc_Yc_Zc_Y =c_{Y'}c_{Z'}c_{Y'} c_X &;&\left\{\begin{array}{l}X\in \CSLs;\   (Y,Z), (Y',Z') \in \Psi(\Omega_0(X)\setminus \CSLs)\\ 
 \textrm{and } \omega_{Y'}(Z') = \omega_{X}(\omega_{Y}(Z))\end{array}\right.\\
 (R_{2.c})& c_Xc_Yc_X = c_{\overline{Z}} c_{\rond{Z}}c_{\overline{Z}}&;&X,Y \in \CSLs \textrm{, } Y \in \OWSs{X} \textrm{, } Z = \omega_X(Y)  \textrm{ and } (X,Y)\!\neq\!\Psi(Z) \\
 (R_{3.a})& (c_Xc_Y)^2 = 1&;&  X,Y \in \CSLs \textrm{ with } Y \in \OWSp{X}\\
 (R_{3.b})&(c_X c_Yc_Xc_Z)^2 = 1&;&  Z\in \CSLs \textrm{ and }  (X,Y)\in \Psi\big(\OWSp{Z}\setminus \CSLs\big) \end{array}\right. \right\rangle
$$
\end{thm}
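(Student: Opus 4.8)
The plan is to assemble the three preparatory lemmas (Step~1, Lemma~\ref{step3}; Step~2, Lemma~\ref{step1}; Step~3, Lemma~\ref{step2}) and then finish with one further round of Tietze transformations, namely the elimination of every generator $c_Z$ with $Z\in\FFF\setminus\CSLs$. After Steps~1--3 we already have a presentation of $C\WS$ on the full family $\{c_X : X\in\FFF\}$ whose relations are: the torsion relations $c_X^2=1$ for $X\in\CSLs$; the relations of type $(R_2)$, written $c_Xc_P=c_{\omega_X(P)}c_X$, only for $X\in\CSLs$ and $P\in\OWSs{X}$, with those of type $(\widehat R_2)$ rewritten by Step~2 in the form $(\widehat R_{2.c})$, $c_Xc_Pc_X=c_{\omega_X(P)}$ with $P\in\CSLs$ and $\omega_X(P)\notin\CSLs$; and the relations of type $(R_{3.a})$ and $(R_{3.b})$ produced by Step~3. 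So it remains to remove the generators outside $\CSLs$ and to identify what each surviving relation becomes.

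First I would record that, for each $Z\in\FFF\setminus\CSLs$, property~(i)(b) of Definition~\ref{defiCSL} gives $\omega_{\overline{Z}}(\rond{Z})=Z$ with $\overline{Z},\rond{Z}\in\CSLs$, so the relation $c_{\overline{Z}}c_{\rond{Z}}c_{\overline{Z}}=c_Z$ is exactly the relation of type $(\widehat R_{2.c})$ attached to the pair $\Psi(Z)=(\overline{Z},\rond{Z})$. Since its right-hand word involves only generators of $\CSLs$, these defining relations carry no $c_Z$ on their right-hand side and are mutually non-circular; hence a single Tietze pass eliminates every $c_Z$ with $Z\notin\CSLs$ by substituting $c_Z\mapsto c_{\overline{Z}}c_{\rond{Z}}c_{\overline{Z}}$, discarding precisely one $(\widehat R_{2.c})$ relation per $Z$, the one indexed by $(X,P)=\Psi(Z)$.

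Next I would carry out the case analysis on the surviving $(R_2)$ relations $c_Xc_P=c_Qc_X$, where $X\in\CSLs$ and $Q=\omega_X(P)$. If $P,Q\in\CSLs$ the relation is untouched and is exactly $(R_{2.a})$. If exactly one of $P,Q$ lies outside $\CSLs$, Step~2 has put the relation in the form $c_Xc_Pc_X=c_Q$ with $P\in\CSLs$, $Q\notin\CSLs$; then either $(X,P)=\Psi(Q)$, in which case this is the defining relation just used to kill $c_Q$ and is therefore removed, or $(X,P)\neq\Psi(Q)$, in which case substituting $c_Q=c_{\overline{Q}}c_{\rond{Q}}c_{\overline{Q}}$ yields exactly $(R_{2.c})$. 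If neither $P$ nor $Q$ lies in $\CSLs$, substituting both $c_P=c_{\overline{P}}c_{\rond{P}}c_{\overline{P}}$ and $c_Q=c_{\overline{Q}}c_{\rond{Q}}c_{\overline{Q}}$ turns the relation into $c_Xc_{\overline{P}}c_{\rond{P}}c_{\overline{P}}=c_{\overline{Q}}c_{\rond{Q}}c_{\overline{Q}}c_X$; setting $(Y,Z)=\Psi(P)$ and $(Y',Z')=\Psi(Q)$ this is $(R_{2.b})$, and the side condition $\omega_{Y'}(Z')=\omega_X(\omega_Y(Z))$ is simply $Q=\omega_X(P)$ re-expressed through $\Psi$. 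The relations $(R_{3.a})$ and $(R_{3.b})$ already live in the generators of $\CSLs$ (for $(R_{3.b})$ the factor indexed outside $\CSLs$ was written as $c_{\overline{Y}}c_{\rond{Y}}c_{\overline{Y}}$ by Step~3), so they are unchanged by the elimination, after relabeling to match the indexing $(X,Y)\in\Psi(\OWSp{Z}\setminus\CSLs)$ of the statement.

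The step I expect to be the main obstacle is the bookkeeping around the type-$(\widehat R_{2.c})$ relations: one must check that for every $Z\notin\CSLs$ there is a well-defined \emph{unique} defining relation, the one selected by $\Psi(Z)$, that is used for the elimination, while every other relation producing the same $Z$ falls into $(R_{2.c})$. This is exactly what the condition $(X,Y)\neq\Psi(Z)$ encodes, and it is what guarantees that we neither lose nor duplicate a relation in passing to the generating set $C_\CSLs$. Everything else is routine substitution together with the identity $\omega_{\overline{Z}}(\rond{Z})=Z$ and the order-two relations $(R_{1.a})$.
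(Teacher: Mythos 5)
Your proposal is correct and follows essentially the same route as the paper: after invoking Lemmas~\ref{step3}, \ref{step1} and \ref{step2}, it eliminates each generator $c_Z$ with $Z\in\FFF\setminus\CSLs$ via the $(\widehat{R}_{2.c})$ relation $c_{\overline{Z}}c_{\rond{Z}}c_{\overline{Z}}=c_Z$ selected by $\Psi(Z)$ and substitutes $c_{\overline{Z}}c_{\rond{Z}}c_{\overline{Z}}$ in the remaining relations, exactly as the paper's final Tietze step does. Your case analysis tracking which surviving relation becomes $(R_{2.a})$, $(R_{2.b})$ or $(R_{2.c})$ is merely a more explicit rendering of the paper's one-line elimination, so no gap.
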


\begin{proof}
Applying the above lemmas, we get that $C\WS$ has the following presentation:   
$$
\left\langle c_\FFF  \left | \begin{array}{cccl}
 (R_{1.a})&c_X^2=1&;&X\in \CSLs;\\
 (R_{2.a})& c_Xc_Y =c_{\omega_X(Y)} c_X &;&  X,Y,\omega_X(Y)\in \CSLs  \textrm{ with } Y\in \Omega_0(X);\\
(\widehat{R}_{2.b})& c_Xc_Y =c_{\omega_X(Y)} c_X  &;& X\in \CSLs \textrm{ and } Y, \omega_X(Y) \in \Omega_0(X) \setminus \CSLs; \\
(\widehat{R}_{2.c})& c_X c_Yc_X =  c_{\omega_X(Y)}&;& X,Y\in \CSLs  \textrm{ with } Y\in \OWSs{X} \textrm{ and } \omega_X(Y)\not\in\CSLs.\\ 
(R_{3.a})& (c_Xc_Y)^2 = 1&;&  X,Y \in \CSLs \textrm{ with } Y \in \OWSp{X};\\
 (R_{3.b})&(c_Y c_Zc_Yc_X)^2 = 1&;&  X\in \CSLs \textrm{ and }  (Y,Z)\in \Psi\big(\OWSp{X}\setminus \CSLs\big); \end{array}\right. \right\rangle$$

Let $X$ be in $\FFF\setminus \CSLs$.  Then the relation $c_{\overline{X}}c_{\rond{X}}c_{\overline{X}} = c_X$ is a relation of type $(\widehat{R}_{2,c})$. So, all such generators $c_X$ can be removed from the presentation,  all such  relations  $c_{\overline{X}}c_{\rond{X}}c_{\overline{X}} = c_X$ can also be removed from the presentation and all such letters $c_X$ can be replace with the word $c_{\overline{X}}c_{\rond{X}}c_{\overline{X}}$ in any remaining relation where these letters occur.
\end{proof}

Now Theorem~\ref{ThmA2} follows easily from Theorem~\ref{ThmA}: If  $\CSLs$ is a cross section,  then there is no relation of type  $(R_{2a})$,  $(R_{2,c})$ or  $(R_{3,a})$ (note that, in $\FFF$, in particular, for $X,Y,Z\in \CSLs$, if $\omega_X(Y) = Z$ then $X = Y = Z$).   

\begin{rmq} Consider a section~$(\CSLs,\Psi)$. When $\CSLs$ is a cross section then it is a transversal for~$\equiv$. If~$\CSLs$ is not a cross section but $(\CSLs,\Psi)$ is a transversal section  then in relations of type~$(R_{2,a})$ of Theorem~\ref{ThmA} we must have $\omega_X(Y) = Y$, and we get  commutation relations ; In relations of type~($R_{2,c}$) we must have $Y = \rond{Z}$.  Such a situation occurs in the case of a Coxeter group of type~$D_{2n+1}$  (see the next section). 
\end{rmq}

\section{Cross sections for finite irreducible Coxeter groups}
 \label{annexepresent}
 
 In this section we prove Proposition~\ref{propA}.  We recall that a Coxeter system $\WS$, and its Coxeter matrix, can be defined  by the corresponding Coxeter graph, which is a finite simple labelled graph $\Gamma$ whom vertex set is $S$ and so that any two vertices $s,t$ are joined by an edge when $m_{s,t} \geq 3$. in this case the edge is labelled with $m_{s,t}$.  The common convention when representing the graph is to omit the label when its value is 3.  Finite irreducible Coxeter groups are classified  by their graphs whom list is recalled below. 
 
\begin{center}
\begin{tikzpicture}[x=20pt,y=20pt]
\fill(0,0) circle (0.1) node[below=2pt]{$\sigma_1$};
\fill(1.5,0) circle (0.1) node[below=2pt]{$\sigma_2$};
\fill(3,0) circle (0.1) node[below=2pt]{$\sigma_3$};
\fill(4.5,0) circle (0.1) node[below=2pt]{$\sigma_{n-1}$};
\fill(6,0) circle (0.1) node[below=2pt]{$\sigma_{n}$};
\draw (0,0) -- (1.5,0);
\draw[dotted] (1.5,0) -- (3,0);
\draw (3,0) -- (6,0);
\draw (-0.75,0) node {$A_n: $};
\fill(10,0) circle (0.1) node[below=2pt]{$\sigma_1$};
\fill(11.5,0) circle (0.1) node[below=2pt]{$\sigma_2$};
\fill(13,0) circle (0.1) node[below=2pt]{$\sigma_3$};
\fill(14.5,0) circle (0.1) node[below=2pt]{$\sigma_{n-1}$};
\fill(16,0) circle (0.1) node[below=2pt]{$\sigma_{n}$};
\draw (10,0) -- (13,0);
\draw[dotted] (13,0) -- (14.5,0);
\draw (14.5,0) -- (16,0);
\draw (10.75,0) node[above=2pt] {$4$};
\draw (9.25,0) node {$B_n: $};
\end{tikzpicture}
\end{center}

\begin{center}
\begin{tikzpicture}[x=20pt,y=20pt]
\fill(0.2,0.8) circle (0.1) node[below=2pt]{$\sigma_2$};
\fill(0.2,-0.8) circle (0.1) node[below=2pt]{$\sigma_1$};
\fill(1.5,0) circle (0.1) node[below=2pt]{$\sigma_3$};
\fill(3,0) circle (0.1) node[below=2pt]{$\sigma_4$};
\fill(4.5,0) circle (0.1) node[below=2pt]{$\sigma_{2n-1}$};
\fill(6,0) circle (0.1) node[below=2pt]{$\sigma_{2n}$};
\draw (0.2,0.8) -- (1.5,0) -- (3,0);
\draw (0.2,-0.8) -- (1.5,0);
\draw[dotted] (3,0) -- (4.5,0);
\draw (4.5,0) -- (6,0);
\draw (-0.75,0) node {$D_n: $};

\fill(10,0) circle (0.1) node[below=2pt]{$\sigma_1$};
\fill(11.5,0) circle (0.1) node[below=2pt]{$\sigma_2$};
\fill(13,0) circle (0.1) node[below=2pt]{$\sigma_3$};
\fill(14.5,0) circle (0.1) node[below=2pt]{$\sigma_4$};
\fill(16,0) circle (0.1) node[below=2pt]{$\sigma_5$};
\fill(13,1) circle (0.1) node[above=2pt]{$\sigma_6$};
\draw (10,0) -- (14.5,0);
\draw(13,1) -- (13,0);
\draw (14.5,0) -- (16,0);
\draw (9.25,0) node {$E_6: $};
\end{tikzpicture}
\end{center}

\begin{center}
\begin{tikzpicture}[x=20pt,y=20pt]
\fill(0,0) circle (0.1) node[below=2pt]{$\sigma_1$};
\fill(1.5,0) circle (0.1) node[below=2pt]{$\sigma_2$};
\fill(3,0) circle (0.1) node[below=2pt]{$\sigma_3$};
\fill(4.5,0) circle (0.1) node[below=2pt]{$\sigma_4$};
\fill(6,0) circle (0.1) node[below=2pt]{$\sigma_5$};
\fill(3,1) circle (0.1) node[above=2pt]{$\sigma_7$};
\fill(7.5,0) circle (0.1) node[below=2pt]{$\sigma_6$};
\draw (0,0) -- (7.5,0);
\draw(3,1) -- (3,0);
\draw (-0.75,0) node {$E_7: $};

\fill(10,0) circle (0.1) node[below=2pt]{$\sigma_1$};
\fill(11.5,0) circle (0.1) node[below=2pt]{$\sigma_2$};
\fill(13,0) circle (0.1) node[below=2pt]{$\sigma_3$};
\fill(14.5,0) circle (0.1) node[below=2pt]{$\sigma_4$};
\fill(16,0) circle (0.1) node[below=2pt]{$\sigma_5$};
\fill(13,1) circle (0.1) node[above=2pt]{$\sigma_8$};
\fill(17.5,0) circle (0.1) node[below=2pt]{$\sigma_6$};
\fill(19,0) circle (0.1) node[below=2pt]{$\sigma_7$};
\draw (10,0) -- (19,0);
\draw(13,1) -- (13,0);
\draw (9.25,0) node {$E_8: $};
\end{tikzpicture}
\end{center}

\begin{center}
\begin{tikzpicture}[x=20pt,y=20pt]
\fill(0,0) circle (0.1) node[below=2pt]{$\sigma_1$};
\fill(1.5,0) circle (0.1) node[below=2pt]{$\sigma_2$};
\draw (0,0) -- (1.5,0);
\draw (0.75,0) node[above=2pt] {$n$};
\draw (-0.75,0) node {$I_n: $};

\fill(4,0) circle (0.1) node[below=2pt]{$\sigma_1$};
\fill(5.5,0) circle (0.1) node[below=2pt]{$\sigma_2$};
\fill(7,0) circle (0.1) node[below=2pt]{$\sigma_3$};
\fill(8.5,0) circle (0.1) node[below=2pt]{$\sigma_{4}$};
\draw (4,0) -- (8.5,0);
\draw (6.25,0) node[above=2pt] {$4$};
\draw (3.25,0) node {$F_4: $};

\fill(11,0) circle (0.1) node[below=2pt]{$\sigma_1$};
\fill(12.5,0) circle (0.1) node[below=2pt]{$\sigma_2$};
\fill(14,0) circle (0.1) node[below=2pt]{$\sigma_3$};
\draw (11,0) -- (14,0);
\draw (13.25,0) node[above=2pt] {$5$};
\draw (10.25,0) node {$H_3: $};

\fill(16.5,0) circle (0.1) node[below=2pt]{$\sigma_1$};
\fill(18,0) circle (0.1) node[below=2pt]{$\sigma_2$};
\fill(19.5,0) circle (0.1) node[below=2pt]{$\sigma_3$};
\fill(21,0) circle (0.1) node[below=2pt]{$\sigma_{4}$};
\draw (16.5,0) -- (21,0);
\draw (20.25,0) node[above=2pt] {$5$};
\draw (15.75,0) node {$H_4: $};
\end{tikzpicture}
\end{center}

For each irreducible finite Coxeter  group $W$ with generating set $S$ that is not of type $E$, we provide either a cross section or a transversal section. The verifications are straightforward and are left to the reader. In the sequel when considering a finite irreducible Coxeter group of a given type, we use the above notations.  The reader may note that when $\omega_X(Y) = Z$ with $Z\neq Y$, then either $Y$ is of type $A$ or $Y$ type $D_5$ with $X$ is of type $E_6$. So, if $Y\in \FFF$ is not of type $A$ or of type $D_5$, it is alone in its $\equiv$-class and has to belong to the section~$\CSLs$. For the same reason, if $\omega_S$ is central, then each maximal element of $\FFF\setminus\{S\}$ has to belong to the section.

\subsection{Type $A_n$ }

Consider the Coxeter system $(W,S)$ of type $A_n$. Then, the following set is a cross section: $$\Lambda =\bigg \{\{\sigma_1,\cdots, \sigma_j\}\mid 1\leq j\leq n \bigg\}$$ 
\begin{center}
\begin{tikzpicture}[x=20pt,y=20pt]
\fill(0,0) circle (0.1) node[below=2pt]{$\sigma_1$};
\fill(3,0) circle (0.1) node[below=2pt]{$\sigma_2$};
\fill(6,0) circle (0.1) node[below=2pt]{$\sigma_3$};
\fill(9,0) circle (0.1) node[below=2pt]{$\sigma_{n-1}$};
\fill(12,0) circle (0.1) node[below=2pt]{$\sigma_{n}$};
\draw (0,0) -- (3,0);
\draw[dotted] (3,0) -- (6,0);
\draw (6,0) -- (12,0);
\node[ellipse,
    draw = red,
    minimum width = .2cm, 
    minimum height = 0.25cm] (e) at (0,0) {};
    \node[ellipse,
    draw = red,
    minimum width = 3cm, 
    minimum height = .75cm] (e) at (1.5,0) {};
    \node[ellipse,
    draw = red,
    minimum width = 5.5cm, 
    minimum height = 1cm] (e) at (3,0) {};
    \node[ellipse,
    draw = red,
    minimum width = 7.5cm, 
    minimum height = 1.4cm] (e) at (4.25,0) {};
    \node[ellipse,
    draw = red,
    minimum width = 10.5cm, 
    minimum height = 1.75cm] (e) at (6.5,0) {};
\end{tikzpicture}
\end{center}

 The associated presentation given in  Theorem~\ref{ThmA2} is the one provides in \cite{CheNan2025}. 
\subsection{Type $B_n$ }
Consider the Coxeter system $(W,S)$ of type $B_n$.  Then, Then, the following set is a cross section:  $$\Lambda = \bigg\{\{\sigma_1,\cdots, \sigma_j\}\mid 1\leq j\leq n\bigg\}\cup \bigg\{\{\sigma_j,\cdots, \sigma_n\}\mid 2\leq j\leq n\bigg\}$$
\begin{center}
\begin{tikzpicture}[x=20pt,y=20pt]
\fill(0,0) circle (0.1) node[below=2pt]{$\sigma_1$};
\fill(3,0) circle (0.1) node[below=2pt]{$\sigma_2$};
\fill(6,0) circle (0.1) node[below=2pt]{$\sigma_3$};
\fill(9,0) circle (0.1) node[below=2pt]{$\sigma_{n-1}$};
\fill(12,0) circle (0.1) node[below=2pt]{$\sigma_{n}$};
\draw (0,0) -- (6,0);
\draw[dotted] (6,0) -- (9,0);
\draw (9,0) -- (12,0);
\draw (1.5,0) node[above=2pt] {$4$};

  \node[ellipse,
    draw = red,
    minimum width = .2cm, 
    minimum height = 0.25cm] (e) at (0,0) {};
    \node[ellipse,
    draw = red,
    minimum width = 3cm, 
    minimum height = .75cm] (e) at (1.5,0) {};
    \node[ellipse,
    draw = red,
    minimum width = 5.5cm, 
    minimum height = 1cm] (e) at (3,0) {};
    \node[ellipse,
    draw = red,
    minimum width = 7.5cm, 
    minimum height = 1.4cm] (e) at (4.25,0) {};
    \node[ellipse,
    draw = red,
    minimum width = 10.5cm, 
    minimum height = 1.75cm] (e) at (6.5,0) {}; 
    
    \node[ellipse,
    draw = blue,
    minimum width = .2cm, 
    minimum height = 0.25cm] (e) at (12,0) {};
    \node[ellipse,
    draw = blue,
    minimum width = 3cm, 
    minimum height = 0.75cm] (e) at (10.5,0) {};
    \node[ellipse,
    draw = blue,
    minimum width = 8cm, 
    minimum height = 1.4cm] (e) at (8,0) {};
    \node[ellipse,
    draw = blue,
    minimum width = 5.5cm, 
    minimum height = 1.2cm] (e) at (9,0) {};
   
\end{tikzpicture}
\end{center}

 \subsection{Type $D_n$}

The result depends on whether $n$ is even or odd. 

Consider the Coxeter system $(W,S)$ of type $D_{2n}$ with $n\geq 2$. In this case $\omega_S$ fixes $S$ and the following set is a cross section:  $$\Lambda = \bigg\{\{\sigma_1,\cdots, \sigma_j\}\mid 1\leq j\leq 2n\bigg\}\cup \bigg\{\{\sigma_j,\cdots, \sigma_{2n}\}\mid 2\leq j\leq 2n\bigg\} \cup \bigg\{\{\sigma_1\}\cup \{\sigma_3,\cdots, \sigma_{2n}\} \bigg\}$$
\begin{center}
\begin{tikzpicture}[x=20pt,y=20pt]
\fill(0.2,0.8) circle (0.1) node[below=2pt]{$\sigma_2$};
\fill(0.2,-0.8) circle (0.1) node[below=2pt]{$\sigma_1$};
\fill(3,0) circle (0.1) node[below=2pt]{$\sigma_3$};
\fill(6,0) circle (0.1) node[below=2pt]{$\sigma_4$};
\fill(9,0) circle (0.1) node[below=2pt]{$\sigma_{2n-1}$};
\fill(12,0) circle (0.1) node[below=2pt]{$\sigma_{2n}$};
\draw (0.2,0.8) -- (3,0) -- (6,0);
\draw (0.2,-0.8) -- (3,0);
\draw[dotted] (6,0) -- (9,0);
\draw (9,0) -- (12,0);

    \node[ellipse,
    draw = red,
    minimum width = 4cm, 
    minimum height = 2.1cm] (e) at (0.75,0) {};
    \node[ellipse,
    draw = red,
    minimum width = 6.5cm, 
    minimum height = 2.5cm] (e) at (2.25,0) {};
    \node[ellipse,
    draw = red,
    minimum width = 8.5cm, 
    minimum height = 2.75cm] (e) at (3.5,0) {};
    \node[ellipse,
    draw = red,
    minimum width = 12.cm, 
    minimum height = 3.25cm] (e) at (5.75,0) {}; 

\draw[draw = blue]  (0.4,1.1)--(2.5,0.6)..controls (8,1.5)..(12.5,0.5) arc(60:-60:0.6)..controls (8,-1.5)..(2,-0.5)--(0,0.4) arc (-120: -300:0.4); 

\draw[draw = cyan]  (0.4,-1.2)--(2.5,-0.7)..controls (8,-1.7)..(12.5,-0.7) arc(-60:60:0.8)..controls (8,1.7)..(2,0.6)--(0,-0.5) arc (120:300:0.4); 
   
   \node[ellipse,
    draw = blue,
    minimum width = .2cm, 
    minimum height = 0.25cm] (e) at (12,0) {};
    \node[ellipse,
    draw = blue,
    minimum width = 2.7cm, 
    minimum height = 0.75cm] (e) at (10.5,0) {};
    \node[ellipse,
    draw = blue,
    minimum width = 7cm, 
    minimum height = 1.4cm] (e) at (7.5,0) {};
    \node[ellipse,
    draw = blue,
    minimum width = 5.cm, 
    minimum height = 1.2cm] (e) at (9,0) {};

\end{tikzpicture}
\end{center}

Consider the Coxeter system $(W,S)$ of type $D_{2n}$ with $n\geq 2$.  In this case $\omega_S$ exchanges $\sigma_1$ and $\sigma_2$, and fixes the other generators.  Then, the following set is a transversal section:  $$\Lambda = \bigg\{\{\sigma_1,\cdots, \sigma_j\}\mid 1\leq j\leq 2n+1\bigg\}\cup \bigg\{\{\sigma_2,\cdots, \sigma_{j}\}\mid 2\leq j\leq 2n+1\bigg\}$$  This does not provide a cross section, because for every subset $X = \{\sigma_1,\cdots, \sigma_{2k+1} \}$ with $k\geq 1$,  the element $\omega_X$ exchanges $\sigma_1$ and $\sigma_2$.  One can verify that no cross section exist. 

\begin{center}
\begin{tikzpicture}[x=20pt,y=20pt]
\fill(0.2,0.8) circle (0.1) node[below=2pt]{$\sigma_2$};
\fill(0.2,-0.8) circle (0.1) node[below=2pt]{$\sigma_1$};
\fill(3,0) circle (0.1) node[below=2pt]{$\sigma_3$};
\fill(6,0) circle (0.1) node[below=2pt]{$\sigma_4$};
\fill(9,0) circle (0.1) node[below=2pt]{$\sigma_{2n}$};
\fill(12,0) circle (0.1) node[below=2pt]{$\sigma_{2n+1}$};
\draw (0.2,0.8) -- (3,0) -- (6,0);
\draw (0.2,-0.8) -- (3,0);
\draw[dotted] (6,0) -- (9,0);
\draw (9,0) -- (12,0);

   \node[ellipse,
    draw = red,
    minimum width = 4cm, 
    minimum height = 2.1cm] (e) at (0.75,0) {};
    \node[ellipse,
    draw = red,
    minimum width = 6.5cm, 
    minimum height = 2.5cm] (e) at (2.25,0) {};
    \node[ellipse,
    draw = red,
    minimum width = 8.5cm, 
    minimum height = 2.75cm] (e) at (3.5,0) {};
    \node[ellipse,
    draw = red,
    minimum width = 12.cm, 
    minimum height = 3.25cm] (e) at (5.75,0) {}; 

\draw[draw = blue]  (0.4,1.4)--(3,0.7)..controls (8,1.5)..(12.5,0.5) arc(60:-60:0.6)..controls (8,-1.5)..(2,-0.7)--(-0.1,0.3) arc (-120: -290:0.6); 
\draw[draw = blue]  (0.3,1.3)--(2.85,0.6)..controls (6,1)..(9.5,0.5) arc(60:-60:0.6)..controls (6,-1)..(2.2,-0.5)--(-0.1,0.4) arc (-120: -300:0.5); 
\draw[draw = blue]  (0.3,1.2)--(2.8,0.5)..controls (4,0.6)..(6.5,0.4) arc(60:-60:0.5)..controls (4,-0.6)..(2.6,-0.3)--(-0.1,0.5) arc (-120: -300:0.4); 
\draw[draw = blue]  (0.35,1.12)--(3.15,0.3) arc(60:-120:0.3)--(0.05,0.5) arc (-120: -300:0.35); 
   \node[ellipse,
    draw = blue,
    minimum width = .2cm, 
    minimum height = 0.2cm] (e) at (0.2,0.8) {};

\end{tikzpicture}
\end{center}

 \subsection{Type $E_6$, $E_7$, $E_8$}
 
  Consider a Coxeter system $(W,S)$ of type $E_6$, $E_7$ or $E_8$ .  Then,  no transversal section exists. Indeed, consider the type $E_6$.  There is 5 elements in $\FFF$ that are of type $A_3$ and they are all in the same $\equiv$-class.  Consider a section $\CSLs$.  To be transversal, the section $\CSLs$ has to contain  only one  of these five elements and this element must be able to be send to the 4 others using some $\omega_X$  with $X$ in~$\CSLs$. The unique possibility is  that $\{\sigma_2,\sigma_3,\sigma_4\}$ and that the two subgraphs of type $D_5$ belong to $\CSLs$. But the latter two subgraphs are send one to the other by $\omega_S$.  So $\CSLs$ can not be transversal. Clearly the same argument  can be applied for types $E_7$ and $E_8$. 
  \subsection{Type $I_n$} Consider a Coxeter system $(W,S)$ of type $I_n$ with $n\geq 3$. The result depends on whether $n$ is even or odd.  If $n$ is even, then $\FFF$ is itself a cross section. If $n$ is odd, then $\bigg\{ \{\sigma_1\}; S\bigg\}$ is a cross section.     
  
\begin{center}
\begin{tikzpicture}[x=20pt,y=20pt]
\fill(0,0) circle (0.1) node[below=2pt]{$\sigma_1$};
\fill(1.5,0) circle (0.1) node[below=2pt]{$\sigma_2$};
\draw (0,0) -- (1.5,0);
\draw (0.75,0) node[above=2pt] {$2n$};

\fill(7,0) circle (0.1) node[below=2pt]{$\sigma_1$};
\fill(8.5,0) circle (0.1) node[below=2pt]{$\sigma_2$};
\draw (7,0) -- (8.5,0);
\draw (7.75,0) node[above=2pt] {$2n+1$};

\node[ellipse,
    draw = red,
    minimum width = .2cm, 
    minimum height = 0.2cm] (e) at (0,0) {};
    
\node[ellipse,
    draw = red,
    minimum width = .2cm, 
    minimum height = 0.2cm] (e) at (7,0) {};    

\node[ellipse,
    draw = red,
    minimum width = .2cm, 
    minimum height = 0.2cm] (e) at (1.5,0) {};    
    
\node[ellipse,
    draw = red,
    minimum width = 1.6cm, 
    minimum height = 0.65cm] (e) at (0.75,0) {};    
\node[ellipse,
    draw = red,
    minimum width = 1.6cm, 
    minimum height = 0.65cm] (e) at (7.75,0) {};   
\end{tikzpicture}

\end{center}

  \subsection{Type $F_4$}  
  
  Consider the Coxeter system $(W,S)$ of type $F_4$. Then, the set $\FFF\setminus \bigg\{ \{\sigma_1\}; \{\sigma_4\}  \bigg\}$ is a cross section.
  
  \begin{center}
\begin{tikzpicture}[x=20pt,y=20pt]
\fill(0,0) circle (0.1) node[below=2pt]{$\sigma_1$};
\fill(1.5,0) circle (0.1) node[below=2pt]{$\sigma_2$};
\fill(3,0) circle (0.1) node[below=2pt]{$\sigma_3$};
\fill(4.5,0) circle (0.1) node[below=2pt]{$\sigma_{4}$};
\draw (0,0) -- (4.5,0);
\draw (2.25,0) node[above=2pt] {$4$};
\node[ellipse,
    draw = red,
    minimum width = .2cm, 
    minimum height = 0.2cm] (e) at (1.5,0) {};    
    
\node[ellipse,
    draw = red,
    minimum width = 1.6cm, 
    minimum height = 0.65cm] (e) at (0.75,0) {};    

\node[ellipse,
    draw = red,
    minimum width = .2cm, 
    minimum height = 0.2cm] (e) at (3,0) {};    
    
\node[ellipse,
    draw = red,
    minimum width = 1.6cm, 
    minimum height = 0.65cm] (e) at (3.75,0) {};    

\node[ellipse,
    draw = blue,
    minimum width = 1.6cm, 
    minimum height = 0.65cm] (e) at (2.25,0) {};

\node[ellipse,
    draw = blue,
    minimum width = 5cm, 
    minimum height = 1.2cm] (e) at (2.25,0) {};  
    
\node[ellipse,
	draw = purple,
    minimum width = 3cm, 
    minimum height = 0.9cm] (e) at (1.5,0) {};  
\node[ellipse,
	draw = purple,
    minimum width = 3cm, 
    minimum height = 0.9cm] (e) at (3,0) {};  
\end{tikzpicture}
\end{center}

  \subsection{Type $H_3$ et $H_4$}  
  
Consider the Coxeter system $(W,S)$ of type $H_3$.  Then, the set $\FFF\setminus \bigg\{ \{\sigma_1\}; \{\sigma_3\}  \bigg\}$ is a cross section.

\begin{center}
\begin{tikzpicture}[x=20pt,y=20pt]
\fill(0,0) circle (0.1) node[below=2pt]{$\sigma_1$};
\fill(1.5,0) circle (0.1) node[below=2pt]{$\sigma_2$};
\fill(3,0) circle (0.1) node[below=2pt]{$\sigma_3$};
\draw (0,0) -- (3,0);
\draw (2.25,0) node[above=2pt] {$5$};
\node[ellipse,
    draw = red,
    minimum width = .2cm, 
    minimum height = 0.2cm] (e) at (1.5,0) {};    
    \node[ellipse,
    draw = red,
    minimum width = 1.6cm, 
    minimum height = 0.65cm] (e) at (0.75,0) {};   
    \node[ellipse,
    draw = red,
    minimum width = 1.6cm, 
    minimum height = 0.65cm] (e) at (2.25,0) {};  
    \node[ellipse,
    draw = red,
    minimum width = 3cm, 
    minimum height = 1.2cm] (e) at (1.5,0) {};  
\end{tikzpicture}
\end{center}

Consider the Coxeter system $(W,S)$ of type $H_4$. Then, the set  $$\Lambda = \bigg\{\{\sigma_j,\cdots, \sigma_3\}\mid 1\leq j\leq 3\bigg\}\cup \bigg\{\{\sigma_3,\sigma_4\} ; S \bigg\}$$ is a cross section for $\FFF$. 

  \begin{center}
\begin{tikzpicture}[x=20pt,y=20pt]
\fill(0,0) circle (0.1) node[below=2pt]{$\sigma_1$};
\fill(1.5,0) circle (0.1) node[below=2pt]{$\sigma_2$};
\fill(3,0) circle (0.1) node[below=2pt]{$\sigma_3$};
\fill(4.5,0) circle (0.1) node[below=2pt]{$\sigma_{4}$};
\draw (0,0) -- (4.5,0);
\draw (3.75,0) node[above=2pt] {$5$};
 \node[ellipse, 
 draw = red,
    minimum width = .2cm, 
    minimum height = 0.2cm] (e) at (3,0) {};    
    \node[ellipse,
    draw = red,
    minimum width = 1.6cm, 
    minimum height = 0.65cm] (e) at (2.25,0) {};   
    \node[ellipse,
    draw = red,
    minimum width = 1.6cm, 
    minimum height = 0.65cm] (e) at (3.55,0) {};  
    \node[ellipse,
    draw = red,
    minimum width = 4.5cm, 
    minimum height = 1.2cm] (e) at (2.25,0) {};  
    \node[ellipse,
	draw = red,
    minimum width = 3cm, 
    minimum height = 0.8cm] (e) at (1.5,0) {};  
\end{tikzpicture}
\end{center}

%\bibliographystyle{acm}
%\bibliography{biblio}
\end{document}